\documentclass[a4paper,reqno,12pt]{amsart}
\usepackage[T1]{fontenc}
\usepackage[english]{babel}
\usepackage{mathptmx,amsmath,amssymb,mathtools}
\usepackage{fullpage}
\usepackage{microtype}
\usepackage[hidelinks]{hyperref}

\newtheorem{theorem}{Theorem}[section]
\newtheorem{proposition}[theorem]{Proposition}
\newtheorem{lemma}[theorem]{Lemma}
\newtheorem{corollary}[theorem]{Corollary}
\newtheorem{definition}[theorem]{Definition}

\newcommand{\bC}{\mathbb C}

\newcommand{\PSH}{\operatorname{PSH}}
\newcommand{\cO}{\mathcal O}
\newcommand{\cI}{\mathcal I}
\newcommand{\cJ}{\mathcal J}
\newcommand{\mm}{\mathfrak m}
\newcommand{\ddc}{dd^c}
\newcommand{\reg}{\mathrm{reg}}
\newcommand{\sing}{\mathrm{sing}}
\DeclareMathOperator{\Ass}{Ass}
\DeclareMathOperator{\Ann}{Ann}
\DeclareMathOperator{\ord}{ord}
\DeclareMathOperator{\Proj}{Proj}
\DeclareMathOperator{\Bl}{Bl}

\numberwithin{equation}{section}
\title[Local maximality and analytic singularities]{Local-to-global maximality for plurisubharmonic functions with locally analytic singularities}
\author[Thai Duong Do]{Thai Duong Do$^{1,2}$}
\address{$^{1}$Institute for Artificial Intelligence, VNU University of Engineering and Technology, Vietnam National University, Hanoi, Vietnam.}
\address{$^{2}$Department of Mathematics, National University of Singapore, 10 Lower Kent Ridge Road, 119076, Singapore.}
\email{dtduong@vnu.edu.vn, dtduong@nus.edu.sg}
\author[Pham Hoang Hiep]{Pham Hoang Hiep$^{1}$}
\address{$^{1}$Institute for Artificial Intelligence, VNU University of Engineering and Technology, Vietnam National University, Hanoi, Vietnam.}
\email{phhiepvn@vnu.edu.vn}
\date{}
\subjclass[2020]{Primary 32U05; Secondary 32U40, 32C25, 13B22}
\keywords{Maximal plurisubharmonic function, analytic singularity, analytic spread, integral closure, normalized blow-up}
\begin{document}
\begin{abstract}
We prove that a locally maximal plurisubharmonic function on a domain in $\bC^n$, $n\geq2$, is maximal if it has locally analytic singularities with a two-sided locally bounded remainder. Using McAdam's theorem, we show that local maximality forces the analytic spread of each defining ideal to be less than $n$. We use normalized blow-ups and Siu's analyticity theorem to compare singularities through generic divisorial Lelong numbers. We also obtain a criterion for maximality in terms of maximality outside the pole set and the analytic spread of the local defining ideals.
\end{abstract}
\maketitle
\tableofcontents

\section{Introduction and main results}
A plurisubharmonic function $u$ on a domain $\Omega$ is maximal if every $v\in\PSH(\Omega)$ satisfying $v\leq u$ outside a compact subset of $\Omega$ also satisfies $v\leq u$ on $\Omega$. For locally bounded functions, the Bedford--Taylor theory characterizes maximality by the vanishing of the complex Monge--Amp\`ere measure \cite{BT76,BT82}. Maximality is therefore a local property in that class. For unbounded functions, the local-to-global question requires control of comparison functions near the singular set.

Demailly extended the complex Monge--Amp\`ere operator to plurisubharmonic functions that are locally bounded near the boundary of bounded domains in $\bC^n$ and developed applications to intersection theory \cite{D93}.

Cegrell developed a theory of the complex Monge--Amp\`ere operator for unbounded plurisubharmonic functions \cite{C98,C04}. B\l ocki characterizes its domain of definition under decreasing approximation in dimension two \cite{B04} and in arbitrary dimension \cite{B06}.

On a bounded hyperconvex domain, the negative functions in this domain of definition form Cegrell's class $\mathcal E$. Within the domain of definition, maximality is equivalent to the vanishing of the Monge--Amp\`ere measure and is consequently local, by \cite[Proposition~2.2]{B04}. B\l ocki also shows that, for general maximal functions, the Monge--Amp\`ere measures of the canonical truncations need not tend to zero \cite{B09}. Functions with analytic singularities need not belong to this domain of definition.

In \cite[Theorem~1]{C09}, Cegrell characterized a negative maximal function $u$ on a bounded hyperconvex domain $\Omega$ by the existence of functions $u_j\in\mathcal E_0(\Omega)\cap C(\overline\Omega)$ such that $u_j\geq u$, $u_j\to u$ pointwise, and $(\ddc u_j)^n\to0$ weakly. The sequence is not required to be decreasing. In \cite{C08}, maximal functions serve as generalized boundary values for the Dirichlet problem.

Further comparison principles and Dirichlet problems in Cegrell's classes were studied in \cite{H06,AC07,KH09,ACCP09}. In the present paper, comparison near the singular set is obtained from the defining analytic ideals, without an energy-class assumption.

In this paper we consider functions with locally analytic singularities. More precisely, near each point we assume
\begin{equation}\label{eq:model-intro}
 u=c\log|f|+O(1),\qquad c>0,
\end{equation}
where $f=(f_1,\ldots,f_m)$ is a holomorphic tuple and
$|f|=(\sum_{j=1}^{m}|f_j|^2)^{1/2}$. The remainder $O(1)$ is locally bounded on both sides; no continuity or plurisubharmonicity is assumed. Both $c$ and $f$ may depend on the neighborhood.

\begin{theorem}\label{thm:main}
Let $\Omega\subset\bC^n$, $n\geq2$, be a domain. Suppose that $u\in\PSH(\Omega)$, $u\not\equiv-\infty$, has locally analytic singularities in the sense of \eqref{eq:model-intro}. If $u$ is locally maximal on $\Omega$, then $u$ is maximal on $\Omega$.
\end{theorem}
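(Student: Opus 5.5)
The plan is to follow the route the abstract suggests: first turn local maximality into an algebraic constraint on the defining ideals, then use that constraint to compare any competitor $v$ with $u$ near the pole set.

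\textbf{Step 1 (maximality off the poles).} Let $Z=\{u=-\infty\}$; near each point it is the common zero set of the local tuple $f$, so it is analytic. On $\Omega\setminus Z$ the function $u$ is locally bounded and locally maximal. By Bedford--Taylor \cite{BT76,BT82} this gives $(\ddc u)^n=0$ there, and $u$ is maximal on $\Omega\setminus Z$.

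\textbf{Step 2 (analytic spread).} Fix $a\in Z$ and let $I_a$ be the ideal generated by the germs of $f$ at $a$. Suppose the analytic spread $\ell(I_a)$ equals $n$. By McAdam's theorem, $\mm_a$ is then an asymptotic prime of $I_a$, i.e.\ it lies in $\Ass(\cO_a/\overline{I_a^k})$ for large $k$. I would convert this into a Rees valuation of $I_a$ centred at the point $a$: there is an exceptional prime divisor $E$ of the normalized blow-up of $I_a$ whose image is $\{a\}$. From $E$ one builds a competitor. Take $w=\max\bigl(u,\,c\,\ord_E(I_a)^{-1}A\log|z-a|+B\bigr)$ with constants $A>1$ and $B$ chosen so that $w=u$ near the boundary of a small ball, while $w>u$ at points near $a$ where $\ord_E$ dominates. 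This contradicts local maximality, so $\ell(I_a)<n$ for every $a$.

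\textbf{Step 3 (comparison).} Let $v\in\PSH(\Omega)$ with $v\le u$ outside a compact $K$. I want $v\le u$ on $\Omega\setminus Z$; since $Z$ is pluripolar, this gives $v\le u$ everywhere. The natural move is to show $v\le u+\epsilon\chi$ for a bounded auxiliary function and then apply Step 1. The key claim is that $v$ is no more singular than $u$ along $Z\cap K$ in a strong sense: for each Rees divisor $E$ of $I_a$, its generic Lelong number satisfies $\nu(v,E)\ge c\,\ord_E(I_a)$. Siu's analyticity theorem controls the upper level sets of Lelong numbers on the normalized blow-up. Because $\ell(I_a)<n$, every Rees divisor has centre of positive dimension, so its generic point can be joined, through the pole set, to points near $\partial K$ or outside $K$. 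Near those points $v\le u$ already forces the Lelong number inequality, and the inequality propagates along the centre by semicontinuity. From this, via the valuative characterization of integral closure, I get $v\le c\log|f|+C$ locally near $Z\cap K$. That bounds $v-u$ above near $Z$, so $v-u$ is bounded above on $K\setminus Z$.

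\textbf{Step 4 (conclusion).} With $v-u$ bounded above near $Z$, a bounded comparison on $\Omega\setminus Z$ finishes. One way: since $Z$ is pluripolar and removable for bounded-above differences, $\max(v,u)$ is psh and equal to $u$ near $\partial K$. Then the domination principle for the locally bounded maximal function $u$ on $K\setminus Z$, together with $v-u$ bounded above near $Z$, gives $v\le u$. Concretely, compare $v+\delta\psi$ with $u$, where $\psi$ is a psh exhaustion with poles on $Z$ (for example $\log|f|$ glued), and let $\delta\to0$. This step uses that a psh function tending to $-\infty$ on $Z$ removes the boundary $Z$.

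\textbf{Main obstacle.} I expect the hardest part to be Step 3: propagating the comparison of singularities from near $\partial K$ into the interior of the pole set. The local data $(c,f)$ change from chart to chart, and the centres of the Rees divisors must be connected to the region where $v\le u$ is known. I would first handle connected components of $Z\cap K$ whose Rees-divisor centres leave $K$. Then I would argue that a component compactly inside $K$ would force some centre to be a point, which contradicts $\ell<n$.
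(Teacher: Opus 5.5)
Your overall architecture (McAdam $\Rightarrow$ spread bound, Lelong numbers on a normalized blow-up propagated by Siu, then a perturbed comparison off $Z$) is the paper's. Steps 1 and 4 are sound. In Step 4 you can simply take $\psi=u-\sup_D u$, which does the job of the paper's $(1+\varepsilon)v$.

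\textbf{Step 2 fails as written.} Your competitor $\frac{cA}{\ord_E(I_a)}\log|z-a|+B$ has its only pole at $a$, while $u=-\infty$ on $Z$. If $Z$ has positive dimension at $a$, it meets every small sphere $\partial B(a,r)$. At those points $w$ is finite while $u=-\infty$, so $w\neq u$ near the boundary and $w$ is not an admissible competitor.

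Your construction therefore only covers an isolated pole, i.e.\ an $\mm_a$-primary $I_a$, which is the trivial case. The content of the lemma lies in examples such as $I=(x^2,xy)=x\,\mm$ in $\bC^2$, where $\ell(I)=2$ and $Z=\{x=0\}$. You need a competitor that keeps the full singularity of $u$ along $Z$ away from $a$ but is strictly less singular at $a$. The associated-prime conclusion of McAdam's theorem supplies exactly this.

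Take $g\notin\overline{I^k}$ with $z_jg\in\overline{I^k}$ for all $j$; in the example, $k=1$ and $g=x$. By \eqref{eq:closure}, $|z-a|\,|g|\le A|I|^k$, so $\frac ck\log|g|-\mathrm{const}\le u$ on an annulus around $a$, including along $Z$. Maximality then forces $|g|\le C|I|^k$ on the whole ball, i.e.\ $g\in\overline{I^k}$, a contradiction. Passing to a Rees valuation and then to a power of $|z-a|$ discards precisely this information.

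\textbf{Step 3 has a second gap, which you flag but do not close.} The Rees divisors of a single germ $I_a$ live over a small neighbourhood of $a$. Their centres there, though positive-dimensional, may lie entirely inside $K$, so there is nothing yet to connect to $\Omega\setminus K$.

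The paper first builds a global object. On an overlap of charts meeting $Z$, comparing the two presentations along a generic line gives $c_j/c_k\in\mathbb Q_{>0}$. Writing $c_j=N_jc_A$ within each class of charts linked through $Z$, the ideals $\overline{\cI_j^{N_j}}$ agree on overlaps by \eqref{eq:closure}. They glue to a coherent ideal $\cJ$ on a domain $D\supset K$.

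Every prime divisor $H$ of the normalized blow-up $\pi\colon Y\to D$ of $\cJ$ is then a global irreducible hypersurface. Its image has positive dimension, because fibres have dimension $\ell(\cJ_z)-1\le n-2$. Hence the image cannot lie in $K$.

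Propagation then takes place on $H$, not along its centre in the base. Upper semicontinuity alone only gives closed level sets; you need Siu's analyticity on the connected manifold $H_\reg\setminus Y_\sing$.

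Finally, passing from $\nu_H(\ddc(v\circ\pi))\ge d_H$ to $v\le u+C$ is not the valuative criterion for integral closure, which concerns holomorphic functions. The paper proceeds in three steps:
\begin{enumerate}
\item Siu's decomposition makes $v\circ\pi$ minus a local potential of the divisor plurisubharmonic on $Y_\reg$.
\item This function extends across $Y_\sing$ by normality.
\item Covering the compact fibre $\pi^{-1}(a)$ gives a bound over a neighbourhood of $a$.
\end{enumerate}
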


The singular set
\[
 E=\{u=-\infty\}
\]
is a proper closed analytic subset, and $u$ is locally bounded on $\Omega\setminus E$. It remains to control comparison functions near $E$.

We shall use the following comparison of singularities. Here $\ell(I)$ denotes the analytic spread of an ideal, recalled in Section~2.
\begin{proposition}\label{prop:intro-rigidity}
Let $\Omega\subset\bC^n$, $n\geq2$, be a domain, and let $u\in\PSH(\Omega)$, $u\not\equiv-\infty$. Suppose that $\Omega$ has an open covering $(D_j)$ on which
\[
 u=c_j\log|\cI_j|+O(1),\qquad c_j>0,
 \qquad \ell((\cI_j)_z)<n\quad\text{for every }z\in D_j,
\]
where $\cI_j$ is a nonzero coherent ideal on $D_j$. Set $E=\{u=-\infty\}$. If $v\in\PSH(\Omega)$ satisfies $v\geq u$ and $v=u$ outside a compact subset of $\Omega$, then for every $z\in E$ there are a neighborhood $D_z$ and a finite constant $C_z$ such that
\begin{equation}\label{eq:rigidity}
 0\leq v-u\leq C_z\quad\text{on }D_z\setminus E.
\end{equation}
\end{proposition}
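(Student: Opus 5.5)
The plan is to show that near every $z\in E$ the function $v$ is at least as singular as $u$, that is, $v\le c\log|\cI|+C$ locally. Together with $v\ge u=c\log|\cI|+O(1)$ this gives \eqref{eq:rigidity}. The upper bound will be tested on the Rees valuations of the defining ideals, realized as prime divisors on a normalized blow-up, and it will be obtained there from a global argument.

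\emph{Step 1: a global normalized blow-up.} On an overlap $D_j\cap D_k$ we have $c_j\log|\cI_j|=c_k\log|\cI_k|+O(1)$. After rescaling by a common integer power (when the ratio $c_j/c_k$ is rational; the general case should follow through the valuative description), this should give equality of the integral closures $\overline{\cI_j^{a}}=\overline{\cI_k^{b}}$. The normalized blow-ups $\pi_j:X_j\to D_j$ therefore glue to a normal complex space $\pi:X\to\Omega$, which is proper over $\Omega$. On $X$ the pulled-back ideal is locally principal, generated by some $g$ with divisor $F=\sum a_D D$. Each irreducible component $D$ of $F$ is a Rees divisor, and $\pi^*u=c\log|g|+O(1)$ locally on $X$. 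For $v\in\PSH(\Omega)$ let $\nu_D(v)$ denote the generic Lelong number of $\pi^*v$ along $D$; it is well defined at generic smooth points of $D$. By Siu's analyticity theorem, $\nu_D(v)$ equals the Lelong number of $\pi^*v$ at points of $D$ outside a countable union of proper analytic subsets.

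\emph{Step 2: centers of Rees divisors are not compact.} Take $D$ meeting $\pi^{-1}(z)$, and set $Z=\pi(D)$. Then $Z$ is an irreducible closed analytic subset of $\Omega$, by Remmert's proper mapping theorem. By McAdam's theorem (recalled in Section~2), the hypothesis $\ell((\cI_j)_w)<n$ means that the maximal ideal at $w$ is never the center of a Rees valuation. This should give $\dim Z\ge1$. Since $\bC^n$ contains no compact positive-dimensional analytic sets, $Z\not\subset K$, where $K$ is the compact set outside of which $v=u$. Hence $D$ contains points of $\pi^{-1}(\Omega\setminus K)$, and near these points $\pi^*v=\pi^*u$. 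Since $D$ is irreducible, the generic Lelong number can be computed on this nonempty open piece of $D$, so $\nu_D(v)=\nu_D(u)=c\,a_D$.

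\emph{Step 3: from divisorial numbers to an upper bound.} Near a generic point of $D$, Siu's decomposition gives $\pi^*v\le c\,a_D\log|h_D|+O(1)$, where $h_D$ is a local equation of $D$. Therefore $\pi^*v-c\log|g|$ is bounded above near generic points of every component of $F$. Off $|F|$ it is plurisubharmonic. The remaining set is an analytic set of codimension $\ge2$ in the normal space $X$, so the function extends across it as a plurisubharmonic function. This uses the Grauert--Remmert extension theorem for psh functions bounded above on normal spaces, after first extending across the codimension-one part via the bound. Because $\pi^{-1}(\overline{D_z})$ is compact, the extended function is bounded above there, which gives $v\le c\log|\cI|+C_z$ on $D_z$.

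I expect the main obstacle to be Step 3, together with the gluing in Step 1. The crux is to pass rigorously from ``Lelong number $\ge c\,a_D$ at generic points of each component'' to a uniform bound above near the whole compact fibre $\pi^{-1}(z)$. This includes the singular points of $X$ and the points where several components meet. I would handle it by working on a resolution $\tilde X\to X$, where $\pi^*v-c\log|g|$ becomes $\psi+\sum(\text{nonnegative coefficients})\log|h_{D'}|$ with $\psi$ psh. Exceptional divisors $D'$ that are not Rees divisors only help, since $g$ vanishes on them with multiplicity computed from the Rees divisors, and the coefficients remain nonnegative by integral-closure and valuative arguments.
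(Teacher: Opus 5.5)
Your proposal is correct and follows essentially the paper's route. It glues the normalized blow-ups of the $\overline{\cI_j^{N_j}}$, shows every exceptional prime divisor has positive-dimensional image and therefore meets $\pi^{-1}(\Omega\setminus K)$, transfers the generic Lelong numbers from $u\circ\pi$ to $v\circ\pi$ via Siu, subtracts the divisor using Siu's decomposition, extends across the codimension-two bad set of the normal space, and finishes by compactness of the fibre. The differences are minor: the paper gets $\dim\pi(H)\ge1$ from the fibre-dimension formula $\dim\pi^{-1}(z)=\ell(\cJ_z)-1$ rather than from the Rees-valuation form of McAdam's theorem (which needs the converse of the direction recalled in Section~2); rationality of $c_j/c_k$ near $E$ follows from a generic-line comparison, so no ``general case'' arises; and your resolution detour, whose nonnegativity claim is unjustified as stated, is unnecessary because the Grauert--Remmert extension on $X$ already handles singular points and crossings.
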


The proof of Theorem~\ref{thm:main} combines a necessary condition on the local ideals with the comparison in Proposition~\ref{prop:intro-rigidity}. Using McAdam's theorem, Lemma~\ref{lem:spread} shows that local maximality implies $\ell(\cI_z)<n$. After taking suitable powers and integral closures, we glue finitely many local ideals and form their normalized blow-up. The spread bound ensures that the image of each prime component of the total transform of $E$ has positive dimension and therefore meets the complement of the compact set. Siu's analyticity theorem \cite{Siu} asserts that $\{x:\nu(T,x)\ge a\}$ is analytic for every $a>0$ when $T$ is a positive closed $(1,1)$-current on a complex manifold. Consequently, equality of the pullbacks on a nonempty open part of each prime divisor determines their generic divisorial Lelong numbers. Siu's decomposition then permits subtraction of local logarithmic potentials, giving \eqref{eq:rigidity}.

Rashkovskii--Sigurdsson \cite[Theorem~4.3]{RS} established local maximality properties of Green functions with singularities along complex spaces. Rashkovskii \cite[Theorem~2 and Corollary~3]{R09} subsequently proved propagation of analytic singularities and global maximality results for these Green functions. His propagation theorem assumes that the ideal is integrally generic, meaning that its analytic spread equals the codimension of its zero set at each point of that set, and propagates an upper singularity bound from an open set meeting every irreducible component of that set. Proposition~\ref{prop:intro-rigidity} uses the condition $\ell(\cI_z)<n$, but imposes the different comparison hypotheses $v\geq u$ and $v=u$ outside a compact set. This equality supplies the divisorial bounds on the normalized blow-up. Together with Lemma~\ref{lem:spread}, the proposition proves local-to-global maximality for functions satisfying \eqref{eq:model-intro}, with only a locally bounded remainder and without a representation as a Green function.

In particular, Theorem~\ref{thm:main} applies when $u=c\log|\cI|+O(1)$ for a fixed nonzero coherent ideal $\cI$ and a constant $c>0$. Thus a locally maximal function $\log|f|$, for any finite holomorphic tuple $f$ not identically zero, is maximal.

The proof also characterizes maximality in terms of local maximality outside $E$ and the analytic spread of the defining ideals at points of $E$.

\begin{corollary}\label{cor:criterion}
Let $\Omega\subset\bC^n$, $n\geq2$, be a domain, and let $u\in\PSH(\Omega)$, $u\not\equiv-\infty$. Suppose that on an open covering $(D_j)$,
\[
 u=c_j\log|\cI_j|+O(1),\qquad c_j>0,
\]
where each $\cI_j$ is a coherent ideal with nonzero stalks. Set $E=\{u=-\infty\}$. The following conditions are equivalent:
\begin{enumerate}
\item[(i)] $u$ is maximal on $\Omega$.
\item[(ii)] $u$ is locally maximal on $\Omega$.
\item[(iii)] $u$ is locally maximal on $\Omega\setminus E$, and
\[
 \ell((\cI_j)_z)<n\qquad\text{for every }j\text{ and every }z\in D_j\cap E.
\]
\end{enumerate}
In \textup{(iii)}, local maximality on $\Omega\setminus E$ can equivalently be replaced by
\[
 (\ddc u)^n=0\quad\text{on }\Omega\setminus E,
\]
where the measure is understood in the Bedford--Taylor sense.
\end{corollary}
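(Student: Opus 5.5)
We prove the cycle (i)$\Rightarrow$(ii)$\Rightarrow$(iii)$\Rightarrow$(i), and then handle the Monge--Amp\`ere reformulation. The implication (i)$\Rightarrow$(ii) is immediate: if $u$ is maximal on $\Omega$, then its restriction to any subdomain $D\Subset\Omega$ is maximal. Indeed, given $v\in\PSH(D)$ with $v\le u$ outside a compact $K\subset D$, the function equal to $\max(u,v)$ on $D$ and to $u$ on $\Omega\setminus D$ is plurisubharmonic on $\Omega$ and agrees with $u$ off $K$, so it is $\le u$. For (ii)$\Rightarrow$(iii), local maximality on $\Omega$ restricts to local maximality on the open set $\Omega\setminus E$. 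The spread bound $\ell((\cI_j)_z)<n$ at points $z\in D_j\cap E$ is exactly what Lemma~\ref{lem:spread} provides, via McAdam's theorem, from local maximality near $z$ and the representation $u=c_j\log|\cI_j|+O(1)$. (Lemma~\ref{lem:spread} may be stated for a generating tuple $f$; then we apply it to generators of $(\cI_j)_z$ near $z$.) At points $z\in D_j\setminus E$, the stalk $(\cI_j)_z$ is the unit ideal, since $u>-\infty$ forces $|\cI_j|>0$ near $z$. Hence $\ell((\cI_j)_z)=0<n$ there, and the hypothesis of Proposition~\ref{prop:intro-rigidity} holds at every $z\in D_j$.

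The main step is (iii)$\Rightarrow$(i). Let $w\in\PSH(\Omega)$ satisfy $w\le u$ outside a compact $K\subset\Omega$. Set $v=\max(u,w)$, so that $v\in\PSH(\Omega)$, $v\ge u$, and $v=u$ outside $K$. It suffices to show $v=u$. By Proposition~\ref{prop:intro-rigidity}, each $z\in E$ has a neighborhood $D_z$ on which $0\le v-u\le C_z$ off $E$. We cover the compact set $K\cap E$ by finitely many such $D_z$ and let $C$ be the maximum of the constants. Then $0\le v-u\le C$ on $\Omega\setminus E$ near $E\cap K$, and $v=u$ off $K$. For $\varepsilon>0$, consider $\tilde v=v-\varepsilon\psi$, where $\psi\ge0$ is a bounded plurisubharmonic exhaustion-type correction; a cleaner route is the following. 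On the open set $U=\Omega\setminus E$, the function $u$ is locally bounded and locally maximal, hence maximal on each component of $U$ by Bedford--Taylor. Because $v-u$ is bounded near $E$ and $E$ is pluripolar, we run the comparison on $U$ with $v_\delta=v+\delta\log|\cdot|$-type corrections. Concretely, fix $\delta>0$ and set $v_\delta=v+\delta u$ on $U$. Near $E$ we have $v_\delta\le u+C+\delta u\to-\infty<u$, so $\{v_\delta> (1+\delta)u\}$, or more simply $\{v_\delta-\delta\cdot\text{const}>u\}$, is relatively compact in $U$. The required inequality follows after normalizing $u\le0$ on a neighborhood of $K$, which is possible because $K$ is compact. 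Maximality of $u$ on $U$, in the form of the local Bedford--Taylor comparison, then gives $(v+\delta u)/(1+\delta)\le u$ on $U$. Letting $\delta\to0$ yields $v\le u$ on $U$, hence on $\Omega$, since $E$ has measure zero and both functions are plurisubharmonic. The obstacle is entirely contained in Proposition~\ref{prop:intro-rigidity}; the remaining bookkeeping concerns connectedness of $U$ and the normalization $u\le0$, which is handled by working on a relatively compact subdomain containing $K$.

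Finally, for a locally bounded plurisubharmonic function on the open set $\Omega\setminus E$, local maximality is equivalent to $(\ddc u)^n=0$ in the Bedford--Taylor sense \cite{BT76,BT82}. Applying this on small balls gives the stated equivalent form of (iii).
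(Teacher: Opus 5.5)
Your route is the paper's. You get (ii)$\Rightarrow$(iii) from Lemma~\ref{lem:spread}. For (iii)$\Rightarrow$(i) you apply Proposition~\ref{prop:intro-rigidity} to $v=\max(u,w)$ and then run a perturbed comparison on $D\setminus E$; that comparison is exactly the content of Lemma~\ref{lem:comparison}. The paper perturbs by $(1+\varepsilon)v$ after normalizing $u,v<0$. You add $\delta(u-c)$ instead, which works equally well. One harmless slip: with the paper's convention the unit ideal has $\ell=1$, not $0$; this is still $<n$ since $n\geq2$. Also drop the abandoned sentence about $v-\varepsilon\psi$, since that function need not be plurisubharmonic.

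The perturbation step as you actually wrote it does not work. Since $v_\delta-(1+\delta)u=v-u$, the set $\{v_\delta>(1+\delta)u\}$ is just $\{v>u\}$. That set is not known to be relatively compact in $U$, so maximality of $u$ on $U$ cannot be invoked for it. Moreover, your stated conclusion $(v+\delta u)/(1+\delta)\le u$ is literally equivalent to $v\le u$, so the limit $\delta\to0$ does nothing and the deduction is unsupported.

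The correct version is your own ``more simply'' variant, and you should carry it through explicitly:
\begin{itemize}
\item Fix a bounded domain $K\Subset D\Subset\Omega$, put $c=\sup_D u<\infty$, and set $w_\delta=v+\delta(u-c)\in\PSH(D)$.
\item Off $K$ one has $w_\delta=u+\delta(u-c)\le u$. Here you need $u\le c$ on all of $D$, not only near $K$.
\item Near each $z\in K\cap E$ one has $w_\delta-u\le C_z+\delta(u-c)<0$ after shrinking $D_z$ depending on $\delta$. This holds because $u\to-\infty$ at $z$ by upper semicontinuity.
\item Hence $\{w_\delta>u\}\Subset D\setminus E$, and maximality of $u$ on $D\setminus E$ (Bedford--Taylor) gives $v\le u+\delta(c-u)$ there.
\item Letting $\delta\to0$ at points where $u$ is finite gives $v\le u$ on $D\setminus E$. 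Together with $v=u$ off $K$, and since $E$ is Lebesgue-null, $v\le u$ on all of $\Omega$.
\end{itemize}
With this correction your proof is complete and matches the paper's.
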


\section{Preliminaries and auxiliary results}
We use $d^c=\frac{i}{2\pi}(\bar\partial-\partial)$, so that
$\ddc=\frac{i}{\pi}\partial\bar\partial$ and
$\ddc\log|f|=[\operatorname{div}(f)]$ for a scalar-valued holomorphic function $f$ not identically zero.
All plurisubharmonic functions take values in $[-\infty,+\infty)$.

\begin{definition}
A function $u\in\PSH(D)$ is maximal on $D$ if, for every $v\in\PSH(D)$ such that $\{v>u\}\Subset D$, one has $v\leq u$. Here $A\Subset D$ means that $\overline A$ is compact and contained in $D$. The function $u$ is locally maximal if every point has an open neighborhood on which $u$ is maximal.
\end{definition}
We use this definition throughout. Maximality passes to open subsets: a competitor on an open subset can be replaced by its maximum with $u$ and glued to $u$ outside its compact support. More explicitly, if $G\subset D$ is open and $v\in\PSH(G)$ satisfies $\{v>u\}\Subset G$, then $\max\{u,v\}$ agrees with $u$ near the boundary of $G$ in $D$. Defining it to be $u$ on $D\setminus G$ therefore gives a plurisubharmonic function on $D$. Maximality on $D$ forces this extension to equal $u$, which proves maximality on $G$.

\begin{definition}
A function $u\in\PSH(D)$ has locally analytic singularities if each point has a neighborhood on which
\begin{equation}\label{eq:model}
 c\log|f|-M\leq u\leq c\log|f|+M
\end{equation}
for a finite holomorphic tuple $f$ not identically zero, a constant $c>0$, and a finite constant $M$. The inequalities include points of the common zero set of $f$.
\end{definition}
At a point where $u$ is locally bounded, the unit ideal gives such a presentation. On each neighborhood where \eqref{eq:model} holds, $E=\{f=0\}$; this proves the assertions about $E$ made in the introduction.

The two inequalities in \eqref{eq:model} will be used separately. The lower bound prevents the singularity of $u$ from being stronger than its ideal model, while the upper bound prevents it from being weaker. After principalizing the ideal, these bounds give a bounded residual function. Its plurisubharmonicity will follow from subtraction of the divisorial current, rather than from any regularity assumption on the original remainder.

For a nonzero ideal $I=(f_1,\ldots,f_m)$ in the local analytic ring $\cO_{\Omega,z}$, write $|I|=(\sum_{j=1}^{m}|f_j|^2)^{1/2}$. Different finite generating systems have comparable norms after shrinking. Thus $\log|I|$ is well defined up to a locally bounded term. The integral closure $\overline I$ satisfies
\begin{equation}\label{eq:closure}
 g\in\overline I\quad\Longleftrightarrow\quad |g|\leq C|I|\text{ near }z
\end{equation}
for some finite $C$; see \cite[Theorem~2.1, equivalence of (i) and (vi)]{LT}. The coherence of the integral closure of a coherent ideal is proved in \cite[Theorem~2.9]{LT}.

For a Noetherian local ring $(R,\mm)$, let
\[
 \mathcal R(I)=\bigoplus_{k=0}^{\infty}I^kt^k,
 \qquad \mathcal F(I)=\bigoplus_{k=0}^{\infty}I^k/\mm I^k,
 \qquad \ell(I)=\dim\mathcal F(I).
\]
These are the Rees algebra, the fiber cone, and the analytic spread. For a proper ideal, one has $\ell(I)\leq\dim R$; for the unit ideal, $\mathcal F(R)= (R/\mm)[t]$ and $\ell(R)=1$. For a nonzero ideal in a local analytic ring and $N\geq1$,
\begin{equation}\label{eq:spread-invariance}
 \ell(\overline{I^N})=\ell(I^N)=\ell(I).
\end{equation}
Indeed, $\mathcal F(I^N)$ is the $N$th Veronese subalgebra of $\mathcal F(I)$, so the two algebras have the same dimension. For $J=\overline I$, the inclusion $R[It]\subset R[Jt]$ is finite and integral. The induced map of spectra is finite and surjective, also over the closed point of $\operatorname{Spec}R$; hence the fiber cones have the same dimension. Applying this to $I^N$ proves \eqref{eq:spread-invariance}.

In this argument it is the geometry of the closed fibers that matters.
Writing $\kappa=R/\mm$, these fibers are the spectra of
$R[It]\otimes_R\kappa=\mathcal F(I)$ and
$R[Jt]\otimes_R\kappa=\mathcal F(J)$. Base change gives a finite
surjective morphism
\[
 \operatorname{Spec}\mathcal F(J)\longrightarrow
 \operatorname{Spec}\mathcal F(I),
\]
so their dimensions agree. One does not need to regard the map of fiber
cones as an inclusion. This observation explains why replacing an ideal
by its integral closure preserves the fiber-dimension information used
later, as well as its logarithmic singularity type.

We use the following consequence of McAdam's theorem \cite[Theorem~3]{McA}: for the excellent regular local domain $R=\cO_{\Omega,z}$ and a nonzero proper ideal $I$, the equality $\ell(I)=\dim R$ implies
\begin{equation}\label{eq:mcadam}
 \mm\in\Ass(R/\overline{I^k})\quad\text{for some }k\geq1.
\end{equation}
This regular local domain satisfies the altitude formula required in McAdam's theorem.

For a nonzero coherent ideal $\cJ$ on a complex manifold $D$, its normalized blow-up $\pi:Y\to D$ is proper and surjective. The space $Y$ is normal, the map is an isomorphism off $V(\cJ)$, and
\[
 \cJ\cO_Y=\cO_Y(-B)
\]
for an effective Cartier divisor $B$; see \cite[Sections~3 and 4.1]{LT}. By Oka's normalization theorem \cite{Oka51}, the normalization of a reduced complex space is finite, proper, and biholomorphic over the normal locus. In particular, normalization preserves the dimension of each fiber of the blow-up.

On a complex manifold, Siu's decomposition expresses a positive closed $(1,1)$-current as its divisorial part plus a positive residual current \cite{Siu}. We denote the generic Lelong number along a prime divisor $H$ by $\nu_H(T)$. This number is determined on any nonempty relatively open subset of $H$. Indeed, the sets on which the Lelong number is larger than its generic value form a countable union of proper analytic subsets. Locally bounded perturbations of plurisubharmonic functions preserve Lelong numbers.

\begin{samepage}
\begin{lemma}[{\cite[Proposition~2.3]{RS}}]\label{lem:maxpush}
Let $f:X\to Y$ be a proper surjective holomorphic map between
complex manifolds of the same dimension. If $u\in\PSH(X)$, then
\[
 f_{\max}u(z):=\max\{u(\zeta):\zeta\in f^{-1}(z)\}
\]
is plurisubharmonic on $Y$.
\end{lemma}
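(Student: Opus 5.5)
The statement to prove is Lemma~\ref{lem:maxpush}: for a proper surjective holomorphic map $f:X\to Y$ between complex manifolds of the same dimension $n$, the fiberwise maximum $f_{\max}u$ is plurisubharmonic. The plan is to show separately that $f_{\max}u$ is upper semicontinuous and that it satisfies the sub-mean value inequality on holomorphic discs.

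\emph{Upper semicontinuity.} This uses only properness. Since $f$ is proper, each fiber $f^{-1}(z)$ is compact. An upper semicontinuous function attains its maximum on a compact set, so $f_{\max}u(z)$ is well defined in $[-\infty,\infty)$. Now take $z_k\to z$ and points $\zeta_k\in f^{-1}(z_k)$ with $u(\zeta_k)=f_{\max}u(z_k)$. Properness lets us pass to a subsequence with $\zeta_k\to\zeta$, and continuity of $f$ gives $\zeta\in f^{-1}(z)$. Hence
\[
\limsup_k f_{\max}u(z_k)\le u(\zeta)\le f_{\max}u(z).
\]

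\emph{Plurisubharmonicity away from the critical values.} Let $A\subset X$ be the critical set, where $\det df=0$. Since $f$ is surjective and the dimensions agree, $f$ is generically of maximal rank, so $A$ is a proper analytic subset. By Remmert's proper mapping theorem, $f(A)$ is an analytic subset of $Y$. By Sard's theorem it has measure zero, so it is a proper analytic subset.
\begin{itemize}
\item[] Over $Y\setminus f(A)$, the map $f$ is a proper local biholomorphism, hence a finite unbranched covering.
\item[] Near a point $z_0$ of this set, the fiber splits into finitely many holomorphic inverse branches $g_1,\dots,g_N$.
\item[] Therefore $f_{\max}u=\max_i u\circ g_i$ there, which is plurisubharmonic.
\end{itemize}

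\emph{Crossing the critical values.} This is the main obstacle: we must show $f_{\max}u$ is plurisubharmonic across $f(A)$. Upper semicontinuity alone does not suffice. Such a function, plurisubharmonic off an analytic set, extends plurisubharmonically only if it is locally bounded above, and it must also coincide with the $\limsup$ extension.
\begin{itemize}
\item[] Local boundedness above holds, by compactness of fibers over a compact neighborhood.
\item[] Hence the function $w(z)=\limsup_{y\to z,\ y\notin f(A)} f_{\max}u(y)$ is plurisubharmonic on $Y$, by the Grauert--Remmert extension theorem.
\item[] Upper semicontinuity gives $w\le f_{\max}u$.
\end{itemize}
For the reverse inequality, fix $z\in f(A)$ and let $\zeta$ be a maximizer in $f^{-1}(z)$. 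The fibers are finite: a positive-dimensional fiber component would contradict openness properties. If finiteness fails, I would instead argue via an analytic disc, or use that $u$ is the $\limsup$ of its values off the thin set $f^{-1}(f(A))$. Since $f$ is open at $\zeta$ with discrete fiber, we can take $\zeta_k\to\zeta$ with $\zeta_k\notin f^{-1}(f(A))$ and $u(\zeta_k)\to u(\zeta)$. This is possible because a plurisubharmonic function equals the $\limsup$ of its values off a pluripolar set. Then $f(\zeta_k)\to z$ and $f_{\max}u(f(\zeta_k))\ge u(\zeta_k)$, so $w(z)\ge u(\zeta)=f_{\max}u(z)$. Thus $f_{\max}u=w$ is plurisubharmonic.

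The delicate point is the case of positive-dimensional fibers. There I would reduce to the generic situation by applying the same $\limsup$ argument at $\zeta$, using only that $f^{-1}(f(A))$ is a proper analytic subset of $X$ and that $f$ is continuous.
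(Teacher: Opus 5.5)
The paper does not prove this lemma; it quotes it from \cite[Proposition~2.3]{RS}. Your argument is the standard one and is correct in substance: upper semicontinuity from properness, local inverse branches over the regular values, extension of $f_{\max}u$ across the critical values $f(A)$, and the reverse inequality by approaching a maximizer $\zeta$ from outside $S=f^{-1}(f(A))$. Three points in the write-up need repair.

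First, delete the assertions that the fibers are finite and that $f$ is open at $\zeta$. Both are false in general: for the blow-up $\Bl_0\bC^n\to\bC^n$, $n\geq2$, the fiber over $0$ is $\mathbb{P}^{n-1}$, and the map is not open at points of the exceptional divisor. Your final argument never uses them. The inequality $w(z)\geq u(\zeta)$ follows from three facts: $u(\zeta)=\limsup_{\zeta'\to\zeta,\,\zeta'\notin S}u(\zeta')$, continuity of $f$, and $f(\zeta')\notin f(A)$ for $\zeta'\notin S$. Positive-dimensional fibers therefore need no separate treatment.

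Second, that limsup identity needs $S$ to have empty interior, which ``proper analytic subset'' does not say. Suppose $S$ contained an open set $W$. Since $A$ is nowhere dense, $W\setminus A$ is nonempty, and at a point of it $f$ is a local biholomorphism. Then $f(A)\supset f(W)$ would contain an open set, contradicting Sard. Hence $u$ and the plurisubharmonic extension of $u|_{X\setminus S}$ agree almost everywhere, and therefore everywhere.

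Third, the step ``surjective and equidimensional, hence generically of maximal rank'' uses that $X$ is connected, or at least that $f$ has generic rank $n$ on every component. This is not a formality. Take $X=\bC^n\sqcup(\mathbb{P}^1\times\bC^{n-1})$, mapped to $\bC^n$ by the identity on the first component and by projection onto $\{0\}\times\bC^{n-1}$ on the second, with $u=0$ and $u=1$ on the two components. Then $f_{\max}u$ equals $1$ on a hyperplane and $0$ elsewhere, which is not plurisubharmonic. In the paper's application (Proposition~\ref{prop:psh-upper}) the map is finite, so every component of $f^{-1}(D)$ has generic rank $n$ and your argument applies.

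A minor point: in the semicontinuity step, first pass to a subsequence realizing $\limsup_k f_{\max}u(z_k)$, and only then extract a convergent subsequence of $(\zeta_k)$.
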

\end{samepage}

We recall first the classical theorem of Grauert--Remmert
\cite{GR56}: if $X$ is a normal complex space, then a
plurisubharmonic function on $X_{\reg}$ extends uniquely to a
plurisubharmonic function on $X$. In particular, such a
function is locally bounded from above near the singular locus. The next
proposition retains the local upper-boundedness conclusion while replacing
normality by the weaker hypothesis $\dim_{\bC}X_{\sing}\le n-2$.

\begin{proposition}\label{prop:psh-upper}
Let $X$ be a reduced complex space of pure dimension $n$ and assume
\[
 \dim_{\bC}X_{\sing}\le n-2.
\]
Then every $u\in\PSH(X_{\reg})$ is locally bounded from above along
$X_{\sing}$. More precisely, for every $z_0\in X$ there are a neighborhood
$U$ of $z_0$ and $C_U<+\infty$ such that
\[
 u\le C_U\qquad\text{on }U\cap X_{\reg}.
\]
\end{proposition}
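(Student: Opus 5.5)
The plan is a local statement near a point $z_0\in X_{\sing}$. First I would reduce to the normalization. Let $\nu:\tilde X\to X$ be the normalization. By Oka's theorem, recalled above, $\nu$ is finite, proper and biholomorphic over the normal locus. In particular it is biholomorphic over $X_{\reg}$, so $\tilde u:=u\circ\nu$ is plurisubharmonic on $\nu^{-1}(X_{\reg})$. The complement $\nu^{-1}(X_{\sing})$ has dimension $\le n-2$ because $\nu$ is finite, and it contains $\tilde X_{\sing}$. Since $\tilde X$ is normal, $\tilde X_{\reg}\setminus\nu^{-1}(X_{\sing})$ is a manifold minus an analytic set of codimension $\ge2$. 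By the classical Riemann/Grauert--Remmert extension for plurisubharmonic functions across sets of codimension $\ge2$ on manifolds, $\tilde u$ extends to $\PSH(\tilde X_{\reg})$. Applying Grauert--Remmert \cite{GR56} on the normal space $\tilde X$, $\tilde u$ then extends to a plurisubharmonic function on $\tilde X$, and such a function is locally bounded above.

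Next I would transport the bound back to $X$. The fiber $\nu^{-1}(z_0)$ is finite, say $\{w_1,\dots,w_k\}$. Choose relatively compact neighborhoods $W_i\ni w_i$ on which the extension of $\tilde u$ is bounded above by some constant $C$. Since $\nu$ is proper, there is a neighborhood $U$ of $z_0$ with $\nu^{-1}(U)\subset\bigcup_i W_i$. Then for $z\in U\cap X_{\reg}$ we have $u(z)=\tilde u(\nu^{-1}(z))\le C$, which is the claim with $C_U=C$.

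The main point needing care is the first step. I have to make sure that a plurisubharmonic function on a manifold minus an analytic subset $A$ of codimension $\ge2$ extends, without assuming any a priori upper bound. The standard argument is local: near a point of $A$, choose coordinates in which $A$ projects properly. Then on a polydisc $\Delta\times\Delta^{n-1}$ the set $A$ misses $\partial\Delta\times\Delta^{n-1}$ and also misses the complex discs $\Delta\times\{w\}$ for generic $w$, since $A$ has codimension $\ge2$. Sub-mean value estimates on these discs, together with Hartogs-figure type arguments, give local upper boundedness near $A$. After that, the usual removability of closed pluripolar sets for locally bounded-above psh functions gives the extension.

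If that route is delicate on the singular side, I would instead apply Hartogs-figure reasoning directly on $\tilde X$ via a local finite branched projection to $\bC^n$. That means pushing forward with Lemma~\ref{lem:maxpush}, taking the maximum over fibers. The branch locus is harmless here, because the codimension-$\ge2$ set where $\tilde u$ is undefined projects to a codimension-$\ge2$ set in $\bC^n$.
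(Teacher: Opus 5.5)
Your argument is correct, but it takes a different route from the paper.

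\textbf{Your route.} You pass to the normalization $\nu:\tilde X\to X$. You observe that $u\circ\nu$ is plurisubharmonic off the analytic set $\nu^{-1}(X_\sing)$, which has dimension at most $n-2$. You extend it first across this set inside the manifold $\tilde X_\reg$, and then across $\tilde X_\sing$ by the Grauert--Remmert theorem for normal spaces. Finally you push the local upper bound down through the finite proper map $\nu$, which is one-to-one over $X_\reg$.

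\textbf{The paper's route.} The paper never normalizes. It takes a local branched covering $f:U\to\Omega\subset\bC^n$ and forms the fiberwise maximum $u_{\max}$ on $\Omega\setminus f(U\cap X_\sing)$ via Lemma~\ref{lem:maxpush}. It then removes the image set, which has vanishing $\mathcal H^{2n-2}$-measure, by Shiffman's theorem in the Euclidean domain. The bound on each branch is read off from the extended $u_{\max}$.

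\textbf{Comparison.} Your route is shorter and gives more: a plurisubharmonic extension of $u\circ\nu$ to all of $\tilde X$. Its price is that it rests on the normal-space extension theorem. The proposition thus becomes a corollary of exactly the classical result the paper presents it as relaxing. The paper's argument needs only Euclidean removability and the max-pushforward, and is independent of normality.

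\textbf{Two small points in your manifold step.} First, to get a finite supremum you should take $A$ disjoint from $\partial\Delta\times\overline{\Delta'}$ for a slightly smaller polydisc $\Delta'$. Second, passing from generic slices to all points only uses that a plurisubharmonic function equals its essential upper limit. No Hartogs-figure argument is needed.
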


\begin{proof}
Fix $z_0\in X_{\sing}$. By the local parametrization theorem, there are a
neighborhood $U\subset X$ of $z_0$, a domain
$\Omega\subset\bC^n$, and a finite proper surjective holomorphic map
\[
 f:U\longrightarrow\Omega.
\]
Put $E=f(U\cap X_{\sing})$. By Remmert's proper mapping theorem \cite{Remmert56}, $E$ is
analytic; since $f$ is finite,
\[
 \dim_{\bC}E=\dim_{\bC}(U\cap X_{\sing})\le n-2,
\]
and hence $\mathcal H^{2n-2}(E)=0$.

Set $D=\Omega\setminus E$. The map $f:f^{-1}(D)\to D$ is a finite proper
surjective holomorphic map between complex manifolds. Define
\begin{equation}\label{eq:umax-local}
 u_{\max}(z):=\max\{u(\zeta):\zeta\in f^{-1}(z)\},\qquad z\in D.
\end{equation}
By Lemma~\ref{lem:maxpush}, $u_{\max}\in\PSH(D)$.
Shiffman's removable-singularity theorem
\cite{Shiffman} applies because
$\mathcal H^{2n-2}(E)=0$, and yields an extension
$\widetilde u_{\max}\in\PSH(\Omega)$.

Here the codimension assumption is used before any upper bound near the
singular locus is known. Passing to the maximum over the finite fibers
reduces the problem to removal of a set of zero
$\mathcal H^{2n-2}$-measure in a Euclidean domain. Once this extension is
obtained, its local upper bounds control every branch of the finite map.

Choose $D_0\Subset\Omega$ containing $f(z_0)$. Then
$C:=\max\{0,\sup_{\overline{D_0}}\widetilde u_{\max}\}<+\infty$. For
$\zeta\in f^{-1}(D_0)\cap X_{\reg}$ with $f(\zeta)\notin E$,
\[
 u(\zeta)\le u_{\max}(f(\zeta))\le C.
\]
At points of $X_{\reg}\cap f^{-1}(E)$, the same estimate follows from the
upper-limit characterization of $u$ across the proper analytic subset
$f^{-1}(E)\cap X_{\reg}$. Thus $u\le C$ on a
neighborhood of $z_0$ in $X_{\reg}$.
\end{proof}

Together with Demailly's extension theorem, the preceding bound gives the
following consequence.

\begin{proposition}\label{prop:psh-extension}
Under the hypotheses of Proposition~\ref{prop:psh-upper}, assume in addition
that $X$ is locally irreducible. Then every $u\in\PSH(X_{\reg})$ admits the
canonical plurisubharmonic extension
\[
 u^*(z)=\limsup_{X_{\reg}\ni\zeta\to z}u(\zeta),\qquad z\in X,
\]
to all of $X$.
\end{proposition}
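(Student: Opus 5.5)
By Proposition~\ref{prop:psh-upper}, $u$ is locally bounded from above near every point of $X_{\sing}$. The remaining issue is to show that $u^*$ is plurisubharmonic on $X$. In a complex space, plurisubharmonicity is a local condition. It is equivalent to being, near each point, the restriction of a plurisubharmonic function on an ambient open set in $\bC^N$; equivalently, for $u^*$ upper semicontinuous and not identically $-\infty$ on components, to being plurisubharmonic on the regular part and satisfying the extension criterion of Demailly. We therefore invoke Demailly's extension theorem in the following form. If $X$ is a locally irreducible complex space and $A\subset X$ is a nowhere dense analytic subset, then any $u\in\PSH(X\setminus A)$ that is locally bounded from above near $A$ extends uniquely to $\PSH(X)$, and the extension is given by $u^*(z)=\limsup_{X\setminus A\ni\zeta\to z}u(\zeta)$. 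We take $A=X_{\sing}$. This set is a nowhere dense analytic subset because $X$ is reduced, and it has dimension at most $n-2$.

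The steps are as follows. First, fix $z_0\in X_{\sing}$ and apply Proposition~\ref{prop:psh-upper} to obtain a neighborhood $U$ and a constant $C_U$ with $u\le C_U$ on $U\cap X_{\reg}$. Consequently $u^*$ is finite from above on $U$ and is upper semicontinuous by construction as an upper regularized limsup. On $X_{\reg}$, $u^*=u$ because $u$ is upper semicontinuous there.

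Second, we check that $u^*\not\equiv-\infty$ near $z_0$; this is where local irreducibility enters. The germ $(X,z_0)$ is irreducible, so $U\cap X_{\reg}$ is connected for a fundamental system of neighborhoods $U$. Hence $u$ is not identically $-\infty$ on $U\cap X_{\reg}$ unless it is $-\infty$ on an entire component of $X_{\reg}$. In that degenerate case we interpret $u^*\equiv-\infty$ locally, which is consistent with the convention of the paper.

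Third, we apply Demailly's theorem on $U$ to conclude that $u^*|_U\in\PSH(U)$. Since the extension is unique and local, these local extensions glue to a global plurisubharmonic function on $X$.

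The main obstacle is making sure that the version of Demailly's theorem we cite yields the limsup formula precisely and uses only local irreducibility, with no normality hypothesis. Without local irreducibility, the limsup taken over different local branches could fail to be plurisubharmonic. If the cited statement only gives existence of some extension $\tilde u$, we recover the formula as follows. We show $\tilde u=u^*$ by noting that $\tilde u$ is upper semicontinuous and agrees with $u$ off the thin set $X_{\sing}$. Plurisubharmonic functions on a locally irreducible space satisfy $\tilde u(z)=\limsup_{\zeta\to z,\ \zeta\notin A}\tilde u(\zeta)$ for nowhere dense analytic $A$. This can be seen by pulling back to the normalization, which is a homeomorphism under local irreducibility, and using the corresponding fact on normal spaces from Grauert--Remmert.
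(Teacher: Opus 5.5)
Your proposal is correct and follows the paper's proof. Proposition~\ref{prop:psh-upper} gives the local upper bound along $X_{\sing}$, and Demailly's extension theorem \cite[Theorem~1.7]{Demailly1985}, applied on the locally irreducible space $X$ with the nowhere dense analytic set $A=X_{\sing}$, gives the unique plurisubharmonic extension with the upper-limit formula. Your additional steps (non-degeneracy via connectedness of $U\cap X_{\reg}$, gluing, and recovering the formula through the normalization homeomorphism and Grauert--Remmert) are harmless supplements, though local irreducibility is needed mainly for the extension theorem itself rather than for the non-degeneracy check.
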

\begin{proof}
By Proposition~\ref{prop:psh-upper}, $u$ is locally bounded from above along
$X_{\sing}$. The set $X_{\sing}$ is analytic and has empty interior in
$X$. Since $X$ is locally irreducible, Demailly's extension theorem
\cite[Theorem~1.7]{Demailly1985}, applied to $X\setminus X_{\sing}$,
gives a unique plurisubharmonic extension to $X$, with the stated
upper-limit formula.
\end{proof}

Finally, if $D\subset\bC^n$ is open and
$h\in\PSH(D)\cap L^\infty_{\mathrm{loc}}(D)$, then
\begin{equation}\label{eq:BT}
 h\text{ is maximal}\quad\Longleftrightarrow\quad(\ddc h)^n=0
\end{equation}
in the Bedford--Taylor sense \cite{BT76,BT82}. All uses of \eqref{eq:BT} below concern locally bounded functions.

\begin{lemma}\label{lem:spread}
Let $n\geq2$, and suppose that $u=c\log|I|+O(1)$ near $z_0\in\bC^n$, where $I\neq0$ and $c>0$. If $u$ is maximal on a neighborhood of $z_0$, then $\ell(I)\leq n-1$.
\end{lemma}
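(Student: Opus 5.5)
We argue by contradiction. Suppose $\ell(I)=n$. Then $I$ is proper, since the unit ideal has $\ell=1<n$. By McAdam's theorem in the form \eqref{eq:mcadam}, there is $k\geq1$ with $\mm\in\Ass(R/\overline{I^k})$, where $R=\cO_{\bC^n,z_0}$. This gives an element $g\in R$ with
\[
 g\notin\overline{I^k},\qquad \mm g\subset\overline{I^k}.
\]
Put $J=\overline{I^k}$ and $N=\max\{|z_1-z_{0,1}|,\dots,|z_n-z_{0,n}|\}$. By \eqref{eq:closure}, $z_i g\in J$ (coordinates centred at $z_0$) means $|(z_i-z_{0,i})g|\le C|J|\le C'|I|^k$ near $z_0$. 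Hence
\[
 |g|\,|z-z_0|\le C|I|^k
\]
on a small ball around $z_0$. On the other hand $g\notin\overline{I^k}$, so $|g|\le C|I|^k$ fails on every neighborhood of $z_0$.

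The plan is to build a competitor that violates maximality. Set
\[
 w=\frac{c}{k}\log|g|,
\]
which is plurisubharmonic. The estimate above gives
\[
 w\le \frac{c}{k}\log|I|^k-\frac ck\log|z-z_0|+C = c\log|I|-\frac ck\log|z-z_0|+C .
\]
So $w\le u-\frac{c}{k}\log|z-z_0|+C''$ near $z_0$. The aim is a function which is at most $u$ on a small sphere $|z-z_0|=r$ but exceeds $u$ somewhere inside. Take
\[
 v=w+\varepsilon\log\frac{|z-z_0|}{r}-A
\]
for a suitable small $\varepsilon>0$ and constant $A$. The trouble is that $-\frac ck\log|z-z_0|$ blows up, so a bounded shift cannot make $v\le u$ on the sphere uniformly in $r$.

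A cleaner route is to use a fixed small ball $B=B(z_0,r)$ on which $u$ is maximal and on which both estimates hold. On the sphere $\partial B$ we have $|z-z_0|=r$, so $w\le u+C_r$ there. Near $\partial B$, as an open shell, the bound is also uniform, since $|z-z_0|$ stays near $r$. So $v:=w-C_r-1$ satisfies $v<u$ on a neighborhood of $\partial B$. Define $\tilde v=\max\{u,v\}$ inside $B$ and $\tilde v=u$ near $\partial B$. Maximality of $u$ on $B$, in the definition's form, then forces $v\le u$ on $B$.

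This says $\frac ck\log|g|\le c\log|I|+C$, i.e. $|g|\le C|I|^k$ on $B$. That contradicts $g\notin\overline{I^k}$ via \eqref{eq:closure}, so $\ell(I)\le n-1$.

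The $\log|z-z_0|$ factor only blows up at $z_0$, which lies away from the sphere, so it costs nothing. The step needing care is the bookkeeping of neighborhoods. We must choose $r$ so that the maximality neighborhood, the $O(1)$ bounds for $u$, and the integral-closure inequalities for $z_ig$ and $|J|\lesssim|I|^k$ all hold on $\overline B$. We also need the failure of $|g|\le C|I|^k$ on every neighborhood to give failure on $B$ for the specific constant produced. That is automatic, because failure holds for every constant. Finally, $n\ge2$ is used only through McAdam's hypothesis $\dim R=n$ being the relevant condition.
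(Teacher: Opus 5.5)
Your proposal is correct and follows the paper's proof essentially step for step. Both take McAdam's witness $g$ with $\mm g\subset\overline{I^k}$, derive $|z-z_0|\,|g|\le A|I|^k$, and use the lower bound for $u$ to place the competitor $\frac ck\log|g|-\mathrm{const}$ below $u$ on a shell (the paper uses the annulus $r/2\le|z-z_0|<r$). Both then apply maximality with $\max\{u,v\}$ and finish with the upper bound for $u$ and the integral-closure criterion. Two small corrections: on the shell you only get $v\le u$, not $v<u$, since both equal $-\infty$ at zeros of $I$, but this is all you need; and $n\ge2$ actually enters when you exclude the unit ideal ($\ell=1<n$), not through McAdam's theorem.
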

\begin{proof}
We may assume that $z_0=0$ and that $I$ is proper, since the unit ideal has analytic spread one. Suppose, to the contrary, that $\ell(I)=n$. By \eqref{eq:mcadam} and the definition of an associated prime, there exist $k\geq1$ and a nonzero class $g+\overline{I^k}$ such that
\[
 \Ann_{\cO_{\bC^n,0}}(g+\overline{I^k})=\mm_0.
\]
Since $\mm_0=(z_1,\ldots,z_n)$, this gives
\begin{equation}\label{eq:witness}
 g\notin\overline{I^k},\qquad z_jg\in\overline{I^k},\quad 1\leq j\leq n.
\end{equation}
It follows from \eqref{eq:closure}, after choosing representatives on a sufficiently small ball, that
\begin{equation}\label{eq:z-witness}
 |z|\,|g(z)|\leq A|I(z)|^k
\end{equation}
for some $A>0$. Indeed, apply \eqref{eq:closure} to each $z_jg$ and add the squares of the resulting inequalities. The norm of the tuple of all ordered products of $k$ generators of $I$ is $|I|^k$.

Thus $g$ is controlled by $|I|^k$ on every annulus about the origin, although no such estimate can hold on a full neighborhood of the origin: otherwise \eqref{eq:closure} would contradict $g\notin\overline{I^k}$. The logarithm of $g$ will therefore give a competitor which lies below $u$ near the boundary of a small ball but cannot lie below $u$ throughout that ball. The constants below make this comparison valid for the given bounded remainder.

Choose $r>0$ such that $u$ is maximal on a neighborhood $D$ of $\overline{B(0,r)}$ and
\[
 c\log|I|-M\leq u\leq c\log|I|+M
 \quad\text{on }B(0,r).
\]
By \eqref{eq:z-witness}, $|g|\leq(2A/r)|I|^k$ on $r/2\leq|z|<r$. Set
\[
 a=M+\frac{c}{k}\log\frac{2A}{r},
 \qquad v=\frac{c}{k}\log|g|-a.
\]
On this annulus, the choice of $a$ gives $v\leq c\log|I|-M\leq u$, including zeros of $I$, where \eqref{eq:z-witness} forces $g$ to vanish. The function
\[
 \widetilde u=\begin{cases}
 \max\{u,v\},&\text{on }B(0,r),\\
 u,&\text{on }D\setminus\overline{B(0,r/2)},
 \end{cases}
\]
is therefore plurisubharmonic on $D$ and agrees with $u$ outside $\overline{B(0,r/2)}$. Maximality gives $\widetilde u=u$, so $v\leq u$ on $B(0,r)$. Using the upper bound for $u$, we obtain
\[
 |g|\leq\exp\!\left(\frac{k(a+M)}c\right)|I|^k
 =\frac{2A}{r}\exp\!\left(\frac{2kM}{c}\right)|I|^k
 \quad\text{on }B(0,r)\setminus V(I).
\]
By continuity the inequality holds on the whole ball. The criterion \eqref{eq:closure} implies $g\in\overline{I^k}$, contradicting \eqref{eq:witness}.
\end{proof}

In particular, the pole set of a locally maximal function with locally analytic singularities has no isolated point. Otherwise the ideal defining the singularity at such a point $z$ would be $\mm_z$-primary, hence would have analytic spread $n$, contradicting Lemma~\ref{lem:spread}.

The next lemma constructs a coherent ideal from finitely many local expressions for the singularities of $u$. The coefficient may differ on open sets whose intersections do not meet $E$. Directly gluing the defining ideals would be too restrictive, since the same logarithmic singularity can be represented by different powers of an ideal. The proof first makes the coefficients compatible by taking powers and then uses integral closure to turn comparability of norms into equality of ideal sheaves. Only finitely many presentations are needed, since the final comparison takes place near a fixed compact set.

\begin{lemma}\label{lem:glue}
Let $D_0=\bigcup_{j=1}^N D_j$, where each $D_j\subset\bC^n$ is open. Suppose that $u\in\PSH(D_0)$ is not identically $-\infty$ on any connected component of $D_0$, and set $E=\{u=-\infty\}$. Assume that on each $D_j$,
\[
 u=c_j\log|\cI_j|+O(1),\qquad c_j>0,
\]
with $\cI_j$ a coherent ideal whose stalk is nonzero at every point of $D_j$. There are a partition $\mathcal A$ of $\{1,\ldots,N\}$, coherent ideals $\cJ_A\subset\cO_{D_0}$ with pairwise disjoint zero sets, and positive constants $c_A$, indexed by $A\in\mathcal A$, such that, for $\cJ=\prod_{A\in\mathcal A}\cJ_A$,
\begin{equation}\label{eq:commonmodel}
 u=c_A\log|\cJ|+O(1)\quad\text{on }D_j,\quad j\in A.
\end{equation}
Moreover, there are integers $N_j\geq1$ such that, for every $j\in A$,
\[
 c_j=N_jc_A,\qquad
 \cJ|_{D_j}=\cJ_A|_{D_j}=\overline{\cI_j^{N_j}}.
\]
On $D_j$, all $\cJ_{A'}$ with $A'\neq A$ are unit ideals.
\end{lemma}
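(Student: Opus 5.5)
The plan is to let the overlaps dictate the partition. Put $i\sim j$ when $D_i\cap D_j\cap E\neq\emptyset$, and let $\mathcal A$ be the set of equivalence classes of the equivalence relation this generates on $\{1,\ldots,N\}$. Fix a class $A$, indices $i,j\in A$ with $D_i\cap D_j\cap E\neq\emptyset$, and a point $z\in D_i\cap D_j\cap E$. Near $z$ we have $c_i\log|\cI_i|-c_j\log|\cI_j|=O(1)$, so
\[
 c_i\,\ord_H(\cI_i)=c_j\,\ord_H(\cI_j)
\]
for generic Lelong (divisorial) orders along any prime divisor $H$ over $z$. Since $z\in E$ and both ideals are proper at $z$, some such order is a positive integer (take the exceptional divisor of the normalized blow-up at $z$). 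Hence $c_i/c_j\in\mathbb Q_{>0}$.

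By connectedness of the class, all ratios $c_j/c_{j'}$ with $j,j'\in A$ are rational. So we can choose $c_A>0$ with $c_j=N_jc_A$ and $N_j\in\mathbb Z_{\geq1}$: take $c_A=c_{j_0}/q$ for a common denominator $q$. On $D_j$ set $\cK_j=\overline{\cI_j^{N_j}}$. This is coherent by \cite[Theorem~2.9]{LT}, and $u=c_A\log|\cK_j|+O(1)$ there.

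Next we glue the $\cK_j$. On $D_i\cap D_j$ with $i,j\in A$, the equality $c_A\log|\cK_i|=c_A\log|\cK_j|+O(1)$ holds locally at every point. At points off $E$ both stalks are the unit ideal, and at points of $E$ this is the bounded-difference estimate. By \eqref{eq:closure}, mutual norm comparability gives $\cK_i\subset\overline{\cK_j}=\cK_j$ and conversely, so the stalks agree. Integral closure of an integrally closed ideal is itself, since $\overline{I^N}$ is integrally closed. The main subtlety is that comparability must hold at every point of the overlap, including points of $E$ where the two charts' constants differ. This is supplied by the $O(1)$ estimates being local two-sided bounds.

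Define $\cJ_A$ on $D_0$ to be $\cK_j$ on $D_j$ for $j\in A$, and the unit ideal on $D_0\setminus\bigcup_{j\in A}(D_j\cap E)$. This set is open because $E\cap D_j$ is closed in $D_j$ and the $\cK_j$ are unit off $E$. Consistency requires checking two overlaps.
\begin{itemize}
\item[(a)] On $D_j$ with $j\in A$, the unit ideal agrees with $\cK_j$ off $E$.
\item[(b)] For $j\notin A$, the set $D_j$ meets no $D_i\cap E$ with $i\in A$ by the definition of the classes. So $\cJ_A$ is the unit ideal on $D_j$.
\end{itemize}
Closedness of $\bigcup_{j\in A}(D_j\cap E)$ in $D_0$ needs care. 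I would argue that a limit point in $D_0$ lies in some $D_{j'}\cap E$. Since $D_{j'}$ is open, it meets $D_i\cap E$ for some $i\in A$ nearby, so $j'\in A$.

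The same observation gives pairwise disjoint zero sets. For $A\neq A'$, the zero sets $\bigcup_{j\in A}(D_j\cap E)$ and $\bigcup_{j\in A'}(D_j\cap E)$ are disjoint, because a common point would link the two classes.

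Finally, on $D_j$ with $j\in A$, every $\cJ_{A'}$ with $A'\neq A$ is the unit ideal. Therefore $\cJ|_{D_j}=\cJ_A|_{D_j}=\overline{\cI_j^{N_j}}$, and \eqref{eq:commonmodel} follows. The expected main obstacle is the rationality step $c_i/c_j\in\mathbb Q$, which needs a positive integral divisorial order common to both ideals at a point of $E$. I would get this from the normalized blow-up of $\cI_i\cI_j$, or more simply by restricting to a generic complex line through $z$ transverse to $E$. On that line both $\log|\cI|$ become $\ord\cdot\log|t|+O(1)$ with positive integer orders.
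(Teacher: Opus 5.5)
Your proposal is correct and follows essentially the same route as the paper: equivalence classes generated by overlaps meeting $E$, rationality of $c_i/c_j$ from the orders $\ord_z$ compared along a generic line through a common pole, and gluing of the ideals $\overline{\cI_j^{N_j}}$ by applying \eqref{eq:closure} in both directions. The only difference is in the gluing cover. You use the open set $D_0\setminus\bigcup_{j\in A}(D_j\cap E)$, which needs the closedness check that your limit-point argument supplies; your first stated reason for openness is not sufficient on its own. The paper instead prescribes the unit ideal on each $D_j$ with $j\notin A$, so it needs no such check.
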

\begin{proof}
Partition the indices into equivalence classes generated by the relation $D_j\cap D_k\cap E\neq\varnothing$. If $D_j\cap D_k\cap E\neq\varnothing$, choose $z\in D_j\cap D_k\cap E$ and write
\[
 m_j=\ord_z(\cI_j),\qquad m_k=\ord_z(\cI_k).
\]
These are positive integers. Choose a generator of minimal order from each ideal and a vector $a$ on which both lowest nonzero homogeneous terms are nonzero. On the line $z+ta$, the two ideal norms are comparable to $|t|^{m_j}$ and $|t|^{m_k}$. Comparing the two presentations of $u$ yields
\[
 (c_jm_j-c_km_k)\log|t|=O(1),
 \qquad \frac{c_j}{c_k}=\frac{m_k}{m_j}\in\mathbb Q_{>0}.
\]
It follows that $c_j/c_k$ is rational whenever $j$ and $k$ belong to the same equivalence class. Clearing finitely many denominators, choose $c_A>0$ and integers $N_j\geq1$ such that $c_j=N_jc_A$ for all $j\in A$.

The constants $c_A$ themselves need not be rational. Only the ratios of
coefficients within one class are constrained, because an overlap meeting
$E$ supplies a common logarithmic pole along which they can be compared.
On an overlap disjoint from $E$, both ideal norms are locally bounded
away from zero, so the bounded-remainder relation imposes no such
constraint. This is why a separate coefficient is retained for each class.

We claim that the integral closures agree on every overlap of two indices in the same class. If $D_j\cap D_k$ does not meet $E$, both ideals are the unit ideal there and there is nothing to prove. Otherwise, on the overlap the two presentations and $c_j=N_jc_A$, $c_k=N_kc_A$ give
\[
 N_j\log|\cI_j|-N_k\log|\cI_k|=O(1)
\]
on the complement of $E$. Hence, after shrinking around an arbitrary point of the overlap, there is $C>0$ such that
\[
 |\cI_j|^{N_j}\le C|\cI_k|^{N_k},\qquad
 |\cI_k|^{N_k}\le C|\cI_j|^{N_j}
\]
off $E$. The norms are continuous and vanish on $E$, so these inequalities extend across $E$. Applying the analytic criterion for integral dependence \eqref{eq:closure} in the two directions yields
\begin{equation}\label{eq:overlap}
 \overline{\cI_j^{N_j}}\big|_{D_j\cap D_k}
 =\overline{\cI_k^{N_k}}\big|_{D_j\cap D_k}.
\end{equation}
For a fixed equivalence class $A$, prescribe
\[
 \cJ_A|_{D_j}=\begin{cases}
 \overline{\cI_j^{N_j}},&j\in A,\\
 \cO_{D_j},&j\notin A.
 \end{cases}
\]
An overlap between sets whose indices belong to different equivalence classes contains no pole. Both prescribed ideals are then the unit ideal. Together with \eqref{eq:overlap}, this gives compatible local ideals, which glue to a coherent ideal $\cJ_A$.

On $D_j$, $j\in A$, every other factor is the unit ideal, so $\cJ=\overline{\cI_j^{N_j}}$. Let $g$ be a finite tuple of local generators of this integral closure. Applying \eqref{eq:closure} to each component gives $|g|\leq C|\cI_j|^{N_j}$. The reverse estimate follows from $\cI_j^{N_j}\subset\overline{\cI_j^{N_j}}$ by expressing generators of $\cI_j^{N_j}$ in terms of the components of $g$. Hence
\[
 \log|\cJ|=N_j\log|\cI_j|+O(1),
\]
which proves \eqref{eq:commonmodel}. Since at every point at most one factor is nonunit, the zero sets of the $\cJ_A$ are disjoint.
\end{proof}

The same argument shows that analytic spread is independent of the ideal used to write $u=c\log|I|+O(1)$. Indeed, if
\[
 u=c\log|I|+O(1)=d\log|J|+O(1)
\]
near a point of $E$, comparison along a generic line gives $c/d\in\mathbb Q_{>0}$. Choose positive integers $p,q$ such that $c/d=p/q$. Then $p\log|I|-q\log|J|=O(1)$, so \eqref{eq:closure} gives $\overline{I^p}=\overline{J^q}$. Equation~\eqref{eq:spread-invariance} now yields $\ell(I)=\ell(J)$. Away from $E$, every ideal in such an expression is the unit ideal.

We now record the properties of the normalized blow-up needed in the proof.
\begin{lemma}\label{lem:fiber}
Let $\cJ\neq0$ be a coherent ideal on a domain $D\subset\bC^n$, and let $\pi:Y\to D$ be its normalized blow-up. Then
\[
 \dim\pi^{-1}(z)=\ell(\cJ_z)-1.
\]
If $\ell(\cJ_z)\leq n-1$ at every point, no prime component of the divisor of $\cJ\cO_Y$ has a point as its image.
\end{lemma}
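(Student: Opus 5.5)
The fiber formula is local, so I fix $z\in D$ and work with the germ $I=\cJ_z$ in $R=\cO_{D,z}$. I first describe the blow-up (not yet normalized) $\Bl_{\cJ}D$ over a small neighborhood $U$ of $z$, then pass to its normalization.

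Choose generators $f_1,\dots,f_m$ of $\cJ$ on $U$. Then $\Bl_{\cJ}U$ is the closure in $U\times\mathbb P^{m-1}$ of the graph of $[f_1:\dots:f_m]$ over $U\setminus V(\cJ)$. Analytically it is the relative $\Proj$ of the Rees algebra $\mathcal R(\cJ)$. Its fiber over $z$ is therefore
\[
 \Proj\bigl(\mathcal R(I)\otimes_R R/\mm\bigr)=\Proj\mathcal F(I),
\]
a projective scheme embedded in $\mathbb P^{m-1}$. Its dimension is $\dim\mathcal F(I)-1=\ell(I)-1$, since $\mathcal F(I)$ is a standard graded algebra over the field $R/\mm$. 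Passing to the analytic category introduces no subtlety here. The fiber is defined by the homogeneous relations among the $f_j$ modulo $\mm$, and the dimension of a projective algebraic set agrees with its analytic dimension. If $I$ is the unit ideal, the fiber is a single point and $\ell(I)-1=0$, which agrees with the convention $\ell(R)=1$.

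Next, the normalization $\nu:Y\to\Bl_{\cJ}D$ is finite and surjective by Oka's theorem, as recalled above. Hence $\pi^{-1}(z)=\nu^{-1}(\text{fiber})$ maps finitely onto the fiber, and the two have equal dimension. This gives $\dim\pi^{-1}(z)=\ell(\cJ_z)-1$.

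For the second assertion, let $H$ be a prime component of the divisor $B$ with $\cJ\cO_Y=\cO_Y(-B)$. Since $Y$ is normal of pure dimension $n$ and $B$ is Cartier, $H$ is irreducible of dimension $n-1$. Suppose $\pi(H)=\{z\}$. Then $H\subset\pi^{-1}(z)$, so $n-1\le\dim\pi^{-1}(z)=\ell(\cJ_z)-1\le n-2$, a contradiction.

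The main obstacle is justifying $\dim\Proj\mathcal F(I)=\ell(I)-1$ as an analytic fiber. This requires identifying the set-theoretic fiber of the analytic blow-up with the projective spectrum of the fiber cone. I would do this by expressing $\Bl_{\cJ}U$ through the surjection $\mathcal O_U[T_1,\dots,T_m]\to\mathcal R(\cJ)$ and reducing modulo $\mm$. Taking $\mathcal R\otimes R/\mm$ commutes with $\Proj$ as a closed subscheme. Passing from the stalk at $z$ to a neighborhood is harmless, since only the fiber over $z$ matters.
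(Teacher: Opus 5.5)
Your proposal is correct and follows the paper's proof essentially step by step. You identify the fiber of the ordinary blow-up over $z$ with $\Proj\mathcal F(\cJ_z)$ by base change of the relative $\Proj$ of the Rees algebra, and you use the finite surjective normalization to transfer the dimension $\ell(\cJ_z)-1$ to $Y$. You then exclude an $(n-1)$-dimensional prime component inside a fiber of dimension at most $n-2$, exactly as the paper does.
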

\begin{proof}
The fiber of the ordinary blow-up $\pi_0:X=\Bl_{\cJ}D\to D$ over $z$ is the projective analytic space associated to the graded algebra
\[
 \mathcal F(\cJ_z)=\bigoplus_{k=0}^{\infty}\cJ_z^k/\mm_z\cJ_z^k.
\]
This follows from the relative $\Proj$ construction after base change to the residue field. Its dimension is $\dim\mathcal F(\cJ_z)-1$. Normalization $\eta:Y\to X$ is finite and surjective and remains so over each fiber; it therefore preserves fiber dimension. In the unit-ideal case, the fiber is one point and the formula follows from $\mathcal F(\cO_{D,z})=\bC[t]$.

Both blow-ups have pure dimension $n$. If a prime divisor $H$ maps to $z$, then
\[
 n-1=\dim H\leq\dim\pi^{-1}(z)=\ell(\cJ_z)-1,
\]
contradicting the stated bound.
\end{proof}

The preceding conclusion concerns the normalized blow-up. A further resolution may introduce divisors whose images are points. Such a divisor could lie entirely over the compact set on which a competitor differs from $u$, leaving no open part where the two pullbacks are known to agree. We therefore work on $Y$ and use the extension theorem for plurisubharmonic functions on normal spaces. This preserves the geometric consequence of the spread bound while still allowing subtraction of logarithmic potentials across the singular locus of $Y$.

\begin{lemma}\label{lem:center}
Let $K\Subset D$ be compact, and let $H\subset Y$ be a closed irreducible analytic hypersurface whose image under $\pi$ has positive dimension. Then $H\cap\pi^{-1}(D\setminus K)$ is a nonempty relatively open subset of $H$.
\end{lemma}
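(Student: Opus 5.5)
Openness is immediate, since $\pi$ is continuous and $\pi^{-1}(D\setminus K)$ is open in $Y$; so $H\cap\pi^{-1}(D\setminus K)$ is relatively open in $H$. The whole content is nonemptiness, and the plan is to argue by contradiction. Suppose $H\subset\pi^{-1}(K)$, so that $\pi(H)\subset K$. Since $\pi$ is proper and $H$ is a closed analytic subset of $Y$, Remmert's proper mapping theorem \cite{Remmert56} shows that $Z=\pi(H)$ is a closed analytic subset of $D$. It is irreducible as the image of an irreducible space, and by hypothesis $\dim Z\geq1$. We also know that $Z$ is compact, since it is contained in the compact set $K$ and closed in $D$.

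The key step is that a compact analytic subset of a domain in $\bC^n$ is finite. Here is how I would argue it. Take any coordinate function $z_i$ restricted to $Z$. On each irreducible component of $Z$ of positive dimension, choose a point of the regular locus and a local holomorphic disc through it. The function $|z_i|$ attains its maximum over the compact set $Z$. By the maximum principle on irreducible complex spaces (equivalently, on the normalization of $Z$), $z_i$ is then constant on the component where the maximum is attained. Iterating this over all coordinates and all components shows that every coordinate is constant on each irreducible component. Each component is therefore a point, which contradicts $\dim Z\geq1$.

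A cleaner route is to cite directly the classical fact that compact analytic subsets of Stein spaces, in particular of open subsets of $\bC^n$, are finite. I would use that citation and mention the maximum-principle justification briefly.

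I expect the main obstacle to be only bookkeeping. One must check that $\pi(H)$ is indeed analytic and irreducible; this uses properness of $\pi$, which was recorded before Lemma~\ref{lem:fiber}. One must also confirm that "image of positive dimension" refers to $\pi(H)$ as an analytic set, which Remmert's theorem guarantees. With this, $H\not\subset\pi^{-1}(K)$, so the intersection is nonempty and relatively open, as claimed.
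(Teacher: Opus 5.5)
Your proposal is correct and follows essentially the same route as the paper's proof. Properness of $\pi|_H$ and Remmert's theorem make $Z=\pi(H)$ a closed analytic subset of $D$, which would be compact if $Z\subset K$; the maximum principle applied to the coordinate functions then forces $Z$ to be a point, contradicting $\dim Z>0$, and since $Z$ is irreducible you can apply this directly to $Z$, so the step of iterating over components is not needed.
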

\begin{proof}
The restriction of $\pi$ to $H$ is proper. By Remmert's proper mapping theorem \cite{Remmert56}, $Z=\pi(H)$ is a closed analytic subset of $D$. If $Z\subset K$, then $Z$ is compact. Every irreducible component of a compact analytic subset of a domain in $\bC^n$ is a point: the maximum principle on compact complex spaces makes every coordinate function constant. This contradicts $\dim Z>0$. Openness follows from openness of $D\setminus K$.
\end{proof}

For the real divisors used below, a logarithmic potential is understood as follows. If locally $\Delta=\sum_{l=1}^{r} a_l B_l$, where $r$ is finite, $a_l>0$, and each $B_l$ is an effective Cartier divisor with holomorphic local equation $g_l$, set
\[
 \varphi=\sum_{l=1}^{r} a_l\log|g_l|.
\]
Then $\ddc\varphi=[\Delta]$ on the regular locus. Changing the local equations changes $\varphi$ by a pluriharmonic function.

\begin{lemma}\label{lem:cartier}
For the ideals in Lemma~\ref{lem:glue}, restricted to a domain $D\subset D_0$, let $\pi:Y\to D$ be the normalized blow-up of $\cJ$. Enumerate the classes in the partition as $A_1,\ldots,A_s$. Each $\cJ_A\cO_Y$ is invertible, say $\cO_Y(-B_A)$, and
\[
 \Delta=\sum_{k=1}^{s} c_{A_k}B_{A_k}
\]
is an effective real Cartier divisor. Locally on $Y$, the function $u\circ\pi$ differs from a logarithmic potential of $\Delta$ by a bounded function.
\end{lemma}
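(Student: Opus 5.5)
The plan is to derive everything from the universal property of the normalized blow-up together with the product structure $\cJ=\prod_A\cJ_A$ and the disjointness of the zero sets from Lemma~\ref{lem:glue}. By the cited facts from \cite{LT}, $\cJ\cO_Y=\cO_Y(-B)$ for an effective Cartier divisor $B$. Since the zero sets $V(\cJ_A)$ are pairwise disjoint closed subsets of $D$, every point $z\in D$ has a neighborhood $W$ on which all factors except at most one, say $\cJ_{A}$, are the unit ideal. Over $W$ we have $\cJ|_W=\cJ_A|_W$. Hence $\cJ_A\cO_Y=\cJ\cO_Y$ is invertible on $\pi^{-1}(W)$, while every $\cJ_{A'}\cO_Y$ with $A'\neq A$ equals $\cO_Y$ there. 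These local statements glue, because invertibility is local. So each $\cJ_A\cO_Y=\cO_Y(-B_A)$ with $B_A$ effective Cartier, $B=\sum_k B_{A_k}$, and the supports of the $B_{A_k}$ are pairwise disjoint, lying over the disjoint sets $V(\cJ_{A_k})$. Consequently $\Delta=\sum_k c_{A_k}B_{A_k}$ is an effective real Cartier divisor: near any point of $Y$ it is a single positive multiple $c_AB_A$ of an integral Cartier divisor, or it is zero.

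For the comparison, fix $y\in Y$ and put $z=\pi(y)$. Choose $j$ with $z\in D_j$; let $A$ be its class, and choose a neighborhood $W\subset D_j$ of $z$ on which the presentation \eqref{eq:commonmodel} holds with bounded remainder. Let $g=(g_1,\dots,g_m)$ be local generators of $\cJ|_W$. Near $y$, let $h$ be a local equation of $B$, so that $\cJ\cO_Y=h\cO_Y$. Then $g_i\circ\pi=h\,e_i$ with $e_i$ holomorphic and without common zero near $y$, so
\[
 \log|g\circ\pi|=\log|h|+\log|e|,
\]
and $\log|e|$ is bounded on a small neighborhood of $y$; here we use continuity and non-vanishing of $|e|$. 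On $W$, all factors other than $\cJ_A$ are units. Hence near $y$ the divisor $B_{A'}$ vanishes for $A'\ne A$, and $h$ is a local equation of $B_A$. The logarithmic potential of $\Delta$ near $y$ is then $\varphi=c_A\log|h|$, up to a pluriharmonic term. Composing \eqref{eq:commonmodel} with $\pi$ gives $u\circ\pi=c_A\log|g\circ\pi|+O(1)=\varphi+O(1)$ near $y$, including on $\pi^{-1}(E)$ with the convention $-\infty=-\infty+O(1)$.

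The main point needing care is the last step, making sure that the same coefficient $c_A$ governs the whole neighborhood of $y$. This is exactly where disjointness of the zero sets enters. Near $y$ only one class contributes a nontrivial divisor. Away from $\pi^{-1}(E)$, all $B_A$ vanish, $\varphi$ is pluriharmonic, and $u\circ\pi$ is locally bounded. Thus no compatibility between different $c_A$ is ever required. A minor point is that the bounded-remainder constants are uniform on a neighborhood, since $\pi$ is continuous and the constants in \eqref{eq:commonmodel} are uniform on compact subsets of $W$ after shrinking.
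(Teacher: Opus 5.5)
Your proposal is correct and follows essentially the same route as the paper. Both arguments use the disjointness of the zero sets to reduce locally to a single factor with $\cJ=\cJ_A$, deduce invertibility of each $\cJ_A\cO_Y$ from that of $\cJ\cO_Y$, and factor the pulled-back generators as $h$ times a tuple generating $\cO_Y$ to obtain $u\circ\pi=c_A\log|h|+O(1)$ locally.
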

\begin{proof}
Near each base point at most one $\cJ_A$ is nonunit, and there $\cJ=\cJ_A$. Since $\cJ\cO_Y$ is invertible, each $\cJ_A\cO_Y$ is invertible on that inverse image. Where $\cJ_A$ is the unit ideal, its pullback is also the unit ideal. These neighborhoods cover $Y$.

Near a point of $\pi^{-1}(D_k)$ with $k\in A$, choose generators $f_j$ of $\cJ$ and a local generator $h$ of its pullback. Write
\[
 f_j\circ\pi=h g_j,\qquad (g_1,\ldots,g_m)=\cO_Y.
\]
On a smaller neighborhood, $|g|$ is bounded above and away from zero. Thus
\[
 \log|\cJ|\circ\pi=\log|h|+O(1),\qquad
 u\circ\pi=c_A\log|h|+O(1).
\]
The function $c_A\log|h|$ is the required local potential. Off the total transform of $E$, the divisor is zero and $u\circ\pi$ is locally bounded. The divisors $B_A$ are Cartier, although their individual prime components need not be Cartier; the logarithmic potentials are defined using local equations for the $B_A$.

Accordingly, the expression $\Delta=\sum_H d_HH$ records the
coefficients to be compared by Lelong numbers, whereas the expression
$\Delta=\sum_A c_AB_A$ provides the local potentials to be subtracted.
These two descriptions serve different purposes. No choice of a
holomorphic defining equation for an individual prime component at a
singular point of $Y$ is required.
\end{proof}

\begin{lemma}\label{lem:subtract}
Let $Y$ be a normal complex space, and let $\Delta$ be locally a finite positive real linear combination of effective Cartier divisors. Denote by $\mathcal H$ the locally finite family of prime components of $\Delta$, and write $\Delta=\sum_{H\in\mathcal H} d_H H$. On an open set $W$, choose a logarithmic potential $\varphi$ for $\Delta$ as above. Suppose that $\psi\in\PSH(Y)$ is not identically $-\infty$ on any irreducible component of $Y$, and
\[
 \nu_H(\ddc\psi)\geq d_H
\]
for every component of $\Delta$, with the coefficients computed on $Y_\reg$. Then $\psi-\varphi$, initially defined off $|\Delta|$, has a plurisubharmonic extension to $W$. If this difference is locally bounded on both sides off $|\Delta|$, its extension is locally bounded.
\end{lemma}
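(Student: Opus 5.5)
The plan is to prove the statement first on the regular locus $W\cap Y_\reg$ by Siu's decomposition. Then I would extend across $W\cap Y_\sing$ using normality of $Y$, via Proposition~\ref{prop:psh-upper}, Grauert--Remmert, or Proposition~\ref{prop:psh-extension}.

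On $W_0:=W\cap Y_\reg$, the current $T=\ddc\psi$ is positive and closed of bidegree $(1,1)$ on a complex manifold. Siu's decomposition gives
\[
 T=\sum_{H}\nu_H(T)[H]+R,
\]
with $R\ge0$ closed. Since $\nu_H(T)\ge d_H$ for each prime component $H$ of $\Delta$, and $\ddc\varphi=[\Delta]=\sum_H d_H[H]$ on $W_0$, we get
\[
 T-\ddc\varphi=\sum_H(\nu_H(T)-d_H)[H]+\sum_{H'\notin\mathcal H}\nu_{H'}(T)[H']+R\ \ge0.
\]
Locally on a small ball $U\subset W_0$, write $T=\ddc\psi$ and choose a local potential $\rho$ of the positive current $T-\ddc\varphi$. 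Then $\psi-\varphi-\rho$ is pluriharmonic on $U\setminus|\Delta|$. It is also locally integrable, because $\varphi$ and $\psi$ are both $L^1_{\mathrm{loc}}$, the latter since $\psi\not\equiv-\infty$ on components. Since $\ddc$ of the $L^1_{\mathrm{loc}}$ function $\psi-\varphi$ equals $\ddc\rho$ in the sense of distributions on all of $U$, the difference $\psi-\varphi-\rho$ is a pluriharmonic function $h$ on $U$. Hence $\psi-\varphi$ coincides almost everywhere with the plurisubharmonic function $\rho+h$ on $U$. Off $|\Delta|$ both are plurisubharmonic, hence upper semicontinuous and sub-mean-value, so they agree pointwise there. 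These local extensions glue, because a plurisubharmonic function is determined by its values off a nowhere dense analytic set, via the limsup. This gives $w\in\PSH(W_0)$ extending $\psi-\varphi$.

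Next I would extend across $W\cap Y_\sing$. Since $Y$ is normal, $\dim Y_\sing\le n-2$ and $Y$ is locally irreducible. Proposition~\ref{prop:psh-extension}, or directly Grauert--Remmert, applied on $W$ gives the canonical extension $w^*(z)=\limsup_{W_0\ni\zeta\to z}w(\zeta)$. This is plurisubharmonic on $W$ and agrees with $\psi-\varphi$ off $|\Delta|$. I would check that the relevant points of $W\setminus|\Delta|$ are handled consistently. At $Y_\sing\setminus|\Delta|$, the function $\varphi$ is pluriharmonic, so $\psi-\varphi$ is already plurisubharmonic there. By uniqueness of extensions on normal spaces, the two extensions coincide.

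For the boundedness statement, suppose $|\psi-\varphi|\le C$ off $|\Delta|$ locally. The upper bound for $w^*$ is immediate from the limsup formula, since $|\Delta|\cup Y_\sing$ is nowhere dense. For the lower bound, plurisubharmonic functions on normal spaces satisfy $w^*(z)=\limsup_{\zeta\to z,\ \zeta\notin A}w(\zeta)$ for any nowhere dense analytic set $A$. Using $A=|\Delta|\cup Y_\sing$, every such limsup is at least $-C$, so $w^*\ge -C$.

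I expect the main obstacle to be the distributional step: justifying that the plurisubharmonic function $\rho+h$ equals $\psi-\varphi$ pointwise off $|\Delta|$, and that the local choices patch. This uses the Lelong-number inequality on every component in an essential way. Without it, $T-\ddc\varphi$ would fail to be positive near some $H$ and no plurisubharmonic extension could exist.
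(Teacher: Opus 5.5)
Your proposal is correct and follows essentially the paper's own route. Siu's decomposition on $W\cap Y_\reg$ gives $\ddc(\psi-\varphi)=\ddc\psi-[\Delta]\geq0$, so $\psi-\varphi$ has a plurisubharmonic representative that agrees with it off $|\Delta|$; normality, via Propositions~\ref{prop:psh-upper} and~\ref{prop:psh-extension}, extends this across $Y_\sing$, and the two-sided bounds pass to the extension by the upper-limit characterization. Your local potential $\rho$ with the pluriharmonic correction $h$, and your check at points of $Y_\sing\setminus|\Delta|$, spell out details the paper leaves implicit.
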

\begin{proof}
The difference is initially considered only off $|\Delta|$, where
$\varphi$ is finite. At points of $|\Delta|$, both terms may be
$-\infty$, so their pointwise difference is not defined. The argument
first interprets the subtraction as a difference of locally integrable
potentials on the regular locus, and then recovers its canonical
plurisubharmonic representative.

On $W\cap Y_\reg$, the functions $\psi$ and $\varphi$ are locally integrable. By Siu's decomposition \cite{Siu} and the hypothesis on the generic Lelong numbers,
\[
 \ddc(\psi-\varphi)=\ddc\psi-[\Delta]\geq0.
\]
Thus $\psi-\varphi$ has a plurisubharmonic representative $h$ on
$W\cap Y_\reg$. Off $|\Delta|$, the function $\varphi$ is pluriharmonic, so
this representative agrees with the original difference. A normal complex
space is locally pure-dimensional, locally irreducible, and regular in
codimension one. Proposition~\ref{prop:psh-upper}, applied locally to
$h\in\PSH(W\cap Y_\reg)$, shows first that $h$ is locally bounded from
above along $W\cap Y_\sing$. Since a normal complex space is locally
irreducible, Proposition~\ref{prop:psh-extension} then gives the unique
plurisubharmonic extension of $h$ to $W$, still denoted by $h$, with
\[
 h(z)=\limsup_{\substack{\zeta\to z\\\zeta\in W\cap Y_\reg}}h(\zeta),
 \qquad z\in W\cap Y_\sing.
\]

Suppose that $-a\leq\psi-\varphi\leq a$ off $|\Delta|$ on a neighborhood. The upper-limit characterization of a plurisubharmonic extension gives the same bounds for $h$, first across $|\Delta|\cap Y_\reg$ and then across $Y_\sing$. Hence $h$ is locally bounded.

For local potentials $\varphi_j$ on open sets $W_j$, the corresponding extensions satisfy
\begin{equation}\label{eq:transitions}
 h_j=h_k+\varphi_k-\varphi_j
 \quad\text{on }W_j\cap W_k.
\end{equation}
Here $\varphi_k-\varphi_j$, initially defined off $|\Delta|$, extends pluriharmonically to the overlap, since local equations of a Cartier divisor differ by holomorphic units. Equality \eqref{eq:transitions} follows off $|\Delta|$ and then everywhere by uniqueness of the plurisubharmonic extensions.
\end{proof}

\section{Proof of the main theorem}

We first establish the comparison of singularities in
Proposition~\ref{prop:intro-rigidity}. This part of the argument uses the
spread bound and the equality of the two functions outside a compact set;
it does not use maximality away from $E$. That analytic hypothesis enters
only in Lemma~\ref{lem:comparison}, after the difference of the two
functions has been bounded near the pole set.

\begin{proof}[Proof of Proposition~\ref{prop:intro-rigidity}]
Let $K\Subset\Omega$ be compact and $v=u$ on $\Omega\setminus K$. Fix $z\in E$. Choose a bounded connected domain $D\Subset\Omega$ containing $K\cup\{z\}$. Cover $\overline D$ by finitely many open sets on which $u$ has an expression of the form \eqref{eq:model-intro} and apply Lemma~\ref{lem:glue} on their union $D_0$. Restrict the resulting ideals to $D$ and form the normalized blow-up $\pi:Y\to D$ of $\cJ$.

On $D_j$ with $j\in A$, Lemma~\ref{lem:glue} gives $\cJ=\overline{\cI_j^{N_j}}$ and $u=c_A\log|\cJ|+O(1)$. By \eqref{eq:spread-invariance} and the hypothesis of the proposition,
\[
 \ell(\cJ_{z'})=\ell((\cI_j)_{z'})<n\quad\text{for every }z'\in D_j\cap D.
\]
By Lemma~\ref{lem:fiber}, the image of every prime component of $B$ has positive dimension, where $\cJ\cO_Y=\cO_Y(-B)$. The local expression for $u$ and the equality of zero sets under integral closure give
\[
 V(\cJ)=E\cap D,\qquad |B|=\pi^{-1}(E\cap D).
\]

Let $\Delta=\sum_{k=1}^{s}c_{A_k}B_{A_k}=\sum_{H\in\mathcal H} d_H H$ be the real Cartier divisor from Lemma~\ref{lem:cartier}, where $\mathcal H$ denotes its locally finite family of prime components. At a general smooth point of a component $H$, choose a coordinate $t$ defining $H$ and avoid the other components. A local equation for $B_A$ is then a holomorphic unit times $t^{b_H}$. Therefore
\[
 u\circ\pi=c_Ab_H\log|t|+O(1),\qquad
 \nu_H(\ddc(u\circ\pi))=c_Ab_H=d_H.
\]
Normality gives $\operatorname{codim}_Y Y_\sing\geq2$, so $H$ is not contained in $Y_\sing$. The manifold $H_\reg\setminus Y_\sing$ is connected: $H_\reg$ is connected by irreducibility, and removing a proper analytic subset preserves connectedness. It is dense in $H$, and the generic divisorial Lelong numbers are computed there.

By Lemma~\ref{lem:center}, choose a nonempty relatively open subset $U_H\subset H_\reg\setminus Y_\sing$ over $D\setminus K$. The pullbacks $u\circ\pi$ and $v\circ\pi$ agree on a neighborhood of $U_H$, so their Lelong numbers agree there. By Siu's theorem \cite{Siu}, each current attains its generic value outside a countable union of proper analytic subsets of $H_\reg\setminus Y_\sing$. These exceptional subsets cannot cover $U_H$. Choosing a point outside both exceptional sets gives equality of the generic divisorial coefficients, hence
\begin{equation}\label{eq:equal-lelong}
 \nu_H(\ddc(v\circ\pi))=d_H\quad\text{for every }H\subset B.
\end{equation}

The hypothesis $v\geq u$ alone would give only the opposite comparison
of singularities, namely
$\nu_H(\ddc(v\circ\pi))\leq\nu_H(\ddc(u\circ\pi))$.
Equality outside $K$ is what rules out a loss of the divisorial
coefficient. Lemma~\ref{lem:center} makes this equality accessible on
every component, including those whose images meet $K$.

Since $D$ is a domain and $\pi$ is a modification, $Y$ is irreducible. Both $u\circ\pi$ and $v\circ\pi$ are plurisubharmonic on $Y$. They are finite on $\pi^{-1}(D\setminus E)$, because $u$ is locally bounded on $D\setminus E$ and $v\geq u$. Hence neither pullback is identically $-\infty$ on $Y$, as required in Lemma~\ref{lem:subtract}. Applying that lemma to both pullbacks, with local potentials $\varphi_j$ of $\Delta$ on open sets $W_j$, we obtain
\[
 h_j\in\PSH(W_j)\cap L^\infty_{\mathrm{loc}}(W_j),\qquad
 \psi_j\in\PSH(W_j),
\]
where off $|B|$,
\begin{equation}\label{eq:residual-difference}
 h_j=u\circ\pi-\varphi_j,\quad
 \psi_j=v\circ\pi-\varphi_j,\quad
 \psi_j-h_j=(v-u)\circ\pi.
\end{equation}
By \eqref{eq:transitions}, the differences $\psi_j-h_j$ agree on overlaps and coincide with $(v-u)\circ\pi$ off $|B|$.

The boundedness of $h_j$ uses the two-sided analytic model for $u$.
For $\psi_j$, plurisubharmonicity already supplies the local upper bound
needed below. Thus equality of divisorial coefficients is combined with
the bounded residual part of $u$ to control the full difference; equality
of those coefficients alone is not being used as a bounded-difference
criterion for arbitrary plurisubharmonic functions.

All these constructions are local on $Y$. The divisor may have many
prime components globally, but its family of components is locally
finite, so every logarithmic potential involves only finitely many
terms. We likewise do not require a single residual potential on all of
$Y$: changes of local equations add the same pluriharmonic function to
both residual potentials and cancel in their difference. The remaining
step is to turn these local bounds into one bound over a neighborhood
of the chosen base point.

The fiber $F_z=\pi^{-1}(z)$ is compact. Cover it by finitely many relatively compact open sets $U_k\Subset W_{j(k)}$, $1\leq k\leq r$. Local upper bounds for $\psi_{j(k)}$ and local lower bounds for $h_{j(k)}$ give a finite constant $C_z$ such that
\[
 (v-u)\circ\pi\leq C_z\quad\text{on }N\setminus|B|,
 \qquad N=\bigcup_{k=1}^{r}U_k.
\]
For example, one can take the maximum of zero and the finitely many
numbers
$\sup_{\overline U_k}\psi_{j(k)}-\inf_{\overline U_k}h_{j(k)}$.
Covering the entire fiber is essential here: an estimate near a single
point of $F_z$ need not control all points lying over nearby base points.
Properness implies that $\pi(Y\setminus N)$ is closed in $D$. Since this image does not contain $z$, the set
\[
 D_z=D\setminus\pi(Y\setminus N)
\]
is an open neighborhood of $z$, and $\pi^{-1}(D_z)\subset N$. The map $\pi$ is an isomorphism over $D\setminus E$, so the last bound descends to $v-u\leq C_z$ on $D_z\setminus E$. The lower bound follows from $v\geq u$.
\end{proof}

The following lemma is due to Rashkovskii--Sigurdsson \cite[Lemma~4.1]{RS}; we include its proof for completeness.

\begin{lemma}\label{lem:comparison}
Let $D\subset\bC^n$ be a bounded domain. Suppose $u,v\in\PSH(D)$ are bounded above, $E=\{u=-\infty\}$ is a proper closed subset, $u\in L^\infty_{\mathrm{loc}}(D\setminus E)$, and
\[
 (\ddc u)^n=0\qquad\text{on }D\setminus E
\]
in the Bedford--Taylor sense. Assume $v=u$ outside a compact set $K\Subset D$, and $v-u$ is bounded above locally near every point of $E$, off $E$. Then $v\leq u$ on $D$.
\end{lemma}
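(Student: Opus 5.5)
We argue by contradiction, adding a small strictly plurisubharmonic perturbation and using the Bedford--Taylor comparison principle on a region that avoids both $E$ and the boundary of $D$. Since $D$ is bounded, choose $R$ with $D\subset B(0,R)$ and set $\rho(x)=|x|^2-R^2<0$ on $D$. Suppose $v(x_0)>u(x_0)$ for some $x_0\in D$. Then $x_0\notin E$, and $x_0\in K$ since $v=u$ off $K$. Fix $\varepsilon>0$ so small that $v(x_0)+\varepsilon\rho(x_0)>u(x_0)$, put $w=v+\varepsilon\rho$, and consider
\[
 G=\{w>u\}\subset D\setminus E .
\]
Then $G\subset K$ (outside $K$ we have $w=u+\varepsilon\rho<u$), and $x_0\in G$.

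The key step is to show that $G$ stays a positive distance from $E$. Cover the compact set $E\cap K$ by finitely many open sets $V_i$ on which $v-u\le C_i$ off $E$, and put $C=\max C_i$. Near $E$ the function $u$ tends to $-\infty$ while $w-u\le C$ stays bounded above. I would rather not use $w>u$ directly here. Instead I would form $\max\{w,u+\delta\}$ for a suitable small perturbation, or shrink $\varepsilon$ via $\rho$, so that the comparison region is separated from $E$.

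The cleaner route is to replace $u$ by $u_\eta=u+\eta\log$-type correction, which I find awkward. The more robust device is to compare $w$ with $u$ on $\Omega'=D\setminus E$ directly, where both functions are locally bounded. The Bedford--Taylor comparison principle requires
\[
 \liminf_{x\to\partial\Omega'}(u-w)\ge0 .
\]
At $\partial D$ this holds because $w<u$ near $\partial D\setminus K$. At points of $E$ this is exactly where the bound $v-u\le C$ is too weak. Therefore I would instead use the function
\[
 \tilde w=\max\{w-C-1,\;u\}\quad\text{or rather}\quad w_t=\max\{u,\,w-t\}\ \text{for }t\ge0 .
\]
Each $w_t$ is plurisubharmonic on $D$, equals $u$ off $K$, and satisfies $w_t-u\le (C-t)^+$ near $E$ off $E$.

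For $t>C$ (with $C$ chosen on a neighborhood $V$ of $E\cap K$), we have $w_t=u$ on $V\setminus E$. Hence $\{w_t>u\}\Subset D\setminus E$, a compact set on which $u$ is locally bounded and maximal by \eqref{eq:BT}. Maximality gives $w_t\le u$, that is, $w\le u+t$ everywhere. This only bounds $w-u$ by $t$, so the final step is to push $t$ down to $0$. Let $t_0=\inf\{t\ge0:\ w\le u+t\text{ on }D\setminus E\}$, which is finite, and suppose $t_0>0$. Near $E$ we know $w-u\le t_0$. Because $\rho$ is strictly negative and strictly psh, I would run the comparison principle for $u$ and $w-t$ with $t$ slightly below $t_0$: on $\{w-t>u\}$ the mixed Monge--Ampère inequality $(\ddc(w-t))^n\ge\varepsilon^n(\ddc|x|^2)^n>0$ against $(\ddc u)^n=0$ forces this set to be empty, provided it is relatively compact in $D\setminus E$. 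This relative compactness is the main obstacle. I would get it by using the strict bound $w-u\le v-u-\varepsilon\cdot\min(-\rho)$ to gain a uniform margin $\varepsilon\min_K(-\rho)$, which shows that $t_0$ can be lowered by that margin. This contradicts minimality, so $t_0=0$, and letting $\varepsilon\to0$ gives $v\le u$.
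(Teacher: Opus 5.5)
There is a genuine gap in the final step, where you try to push $t$ down to $0$. Your $w_t$ argument with $t>C$ only shows that $\sup_{D\setminus E}(w-u)\le\max\{0,L\}$, where $L$ is the upper limit of $w-u$ at points of $E$. It adds nothing beyond the local bound near $E$. Everything therefore hinges on lowering the excess at $E$ itself.

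For $0<t<t_0$ the set $\{w-t>u\}$ may accumulate on $E$, and your margin does not prevent this. The inequality $w-u\le v-u-\varepsilon\min_K(-\rho)$ compares $w$ with $v$, not with $u+t$.
\begin{itemize}
\item If $t_0$ is defined for $v$, you get $w\le u+t_0-\varepsilon\min_K(-\rho)$ on $K$. Returning to $v=w-\varepsilon\rho$ costs $\varepsilon\sup_K(-\rho)\ge\varepsilon\min_K(-\rho)$, so nothing is lowered.
\item If $t_0$ is defined for $w$, as you wrote, the margin says nothing about $w-u$ near $E$.
\end{itemize}
Strict plurisubharmonicity of $w$ is beside the point. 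Maximality of $u$ on $D\setminus E$ already disposes of any competitor whose excess set is relatively compact in $D\setminus E$. The only difficulty is that relative compactness, and no bounded perturbation such as $\varepsilon\rho$ can absorb a bounded positive excess $v-u\le C$ along $E$.

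The missing idea is a perturbation that tends to $-\infty$ on $E$. It exists precisely because $E$ is the pole set of $u$. Your discarded ``$\log$-type correction'' was the right instinct, and $u$ itself serves.

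The paper normalizes $u,v<0$ by subtracting a common constant and uses $v_\varepsilon=(1+\varepsilon)v$:
\begin{itemize}
\item Off $K$, one has $v_\varepsilon=(1+\varepsilon)u\le u$.
\item Near $z\in E\cap K$, off $E$, one has $v_\varepsilon-u\le\varepsilon u+(1+\varepsilon)a_z$. This is negative near $z$, since $u(z)=-\infty$ and $u$ is upper semicontinuous.
\end{itemize}
Hence $\{v_\varepsilon>u\}\Subset D\setminus E$. Maximality of $u$ there, from \eqref{eq:BT}, gives $v_\varepsilon\le u$, and letting $\varepsilon\downarrow0$ concludes.

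Separately, your opening assertion that $v(x_0)>u(x_0)$ forces $x_0\notin E$ is not automatic, and the same issue affects $G\subset D\setminus E$. You must show that $v=-\infty$ on $E\cap K$. This follows from $v\le u+C$ off $E$, because at a point of the closed Lebesgue-null set $E$ a plurisubharmonic function equals its upper limit taken from outside $E$.
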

\begin{proof}
Since $E=\{u=-\infty\}$ is a proper subset of the domain $D$, the function $u$ is not identically $-\infty$, and $E$ is pluripolar.

The set $D\setminus E$ is open in $\bC^n$, and $u$ is locally bounded there. The hypothesis $(\ddc u)^n=0$ and the Bedford--Taylor characterization~\eqref{eq:BT} imply that $u$ is maximal on $D\setminus E$.

Subtracting a common constant, we may assume that $u,v<0$. Fix $\varepsilon>0$ and put $v_\varepsilon=(1+\varepsilon)v$. We claim that
\begin{equation}\label{eq:compact-comparison}
 \{v_\varepsilon>u\}\Subset D\setminus E.
\end{equation}
Multiplication by $1+\varepsilon$ preserves plurisubharmonicity and makes
the negative competitor more singular near $E$. The local boundedness
of $v-u$ ensures that this change absorbs its possible positive excess
there. The remaining comparison set can then be treated entirely where
$u$ is locally bounded.
First, $v_\varepsilon\leq u$ on $D\setminus K$. Let $z\in K\cap E$. By assumption, there are a neighborhood $D_z$ and a constant $a_z\geq0$ such that $v\leq u+a_z$ on $D_z\setminus E$. Consequently
\[
 v_\varepsilon-u\leq\varepsilon u+(1+\varepsilon)a_z
 \quad\text{on }D_z\setminus E.
\]
Since $u(z)=-\infty$, upper semicontinuity allows us to shrink $D_z$ so that the right-hand side is nonpositive. Moreover, the inequality $v\leq u+a_z$ off $E$ implies $v=-\infty$ on $D_z\cap E$, by the upper-limit characterization of plurisubharmonic functions across a pluripolar set. Thus $v_\varepsilon\leq u$ throughout $D_z$.

Choose finitely many such neighborhoods covering $K\cap E$, and denote their union by $D_0$. Then
\[
 \{v_\varepsilon>u\}\subset K\setminus D_0\Subset D\setminus E,
\]
which proves \eqref{eq:compact-comparison}. Maximality of $u$ on $D\setminus E$ gives $v_\varepsilon\leq u$ there. The same inequality holds on $E$, where $u=v=-\infty$. Letting $\varepsilon\downarrow0$, we conclude that $v\leq u$.
\end{proof}

\begin{proof}[Proof of Corollary~\ref{cor:criterion}]
Maximality passes to open subsets, so \textup{(i)} implies \textup{(ii)}. Lemma~\ref{lem:spread} gives \textup{(ii)}$\Rightarrow$\textup{(iii)}.

Assume \textup{(iii)}. Away from $E$, each defining ideal is the unit ideal and has analytic spread one. Thus the spread bound holds on every $D_j$. Let $\psi\in\PSH(\Omega)$ satisfy $\{\psi>u\}\Subset\Omega$, set $v=\max(u,\psi)$, and let $K=\overline{\{\psi>u\}}$. If $K$ is empty there is nothing to prove. Otherwise choose a bounded domain $K\Subset D\Subset\Omega$. Proposition~\ref{prop:intro-rigidity} bounds $v-u$ above locally near $E\cap D$, off $E$.

Since $u$ is locally bounded on $D\setminus E$ and locally maximal there, \eqref{eq:BT} gives $(\ddc u)^n=0$ on $D\setminus E$. Both $u$ and $v$ are bounded above on $D$, since they are plurisubharmonic on a neighborhood of $\overline D$. Moreover, $v=u$ outside $K$. Lemma~\ref{lem:comparison} therefore yields $v\leq u$ on $D$, and hence $\psi\leq u$ on $\Omega$. This proves \textup{(i)}.

Finally, the equivalence between local maximality and the vanishing of $(\ddc u)^n$ on $\Omega\setminus E$ follows from \eqref{eq:BT}, since $u$ is locally bounded there.
\end{proof}

\begin{proof}[Proof of Theorem~\ref{thm:main}]
The local holomorphic tuples in \eqref{eq:model-intro} generate coherent ideals with nonzero stalks. The conclusion is therefore the implication \textup{(ii)}$\Rightarrow$\textup{(i)} in Corollary~\ref{cor:criterion}.
\end{proof}

\section*{Acknowledgements}
This work was completed while the first author was visiting the Department of Mathematics at the National University of Singapore as a Visiting Senior Research Fellow under the Singapore Academies Southeast Asia Fellowship (SASEAF) Programme, within the project ``Math, Sobolev Space'' (Grant No. E-146-00-0039-01). The first author would like to express his sincere gratitude to the Department of Mathematics at NUS for its warm hospitality and excellent working environment. He also gratefully acknowledges the Singapore National Academy of Science (SNAS) and the National Research Foundation (NRF), Singapore, for their financial support through the SASEAF Programme.


\begin{thebibliography}{ACCP09}
\bibitem[ACCP09]{ACCP09} P.~{\AA}hag, U.~Cegrell, R.~Czy\.z and H.~H.~Pham, Monge--Amp\`ere measures on pluripolar sets, \emph{J. Math. Pures Appl.} \textbf{92}, no.~6, 613--627 (2009).
\bibitem[AC07]{AC07} P.~{\AA}hag and R.~Czy\.z, On the Cegrell classes, \emph{Math. Z.} \textbf{256}, no.~2, 243--264 (2007).
\bibitem[ABW19]{ABW} M.~Andersson, Z.~B\l ocki and E.~Wulcan, On a Monge--Amp\`ere operator for plurisubharmonic functions with analytic singularities, \emph{Indiana Univ. Math. J.} \textbf{68}, no.~4, 1217--1231 (2019).
\bibitem[BT76]{BT76} E.~Bedford and B.~A.~Taylor, The Dirichlet problem for a complex Monge--Amp\`ere equation, \emph{Invent. Math.} \textbf{37}, 1--44 (1976).
\bibitem[BT82]{BT82} E.~Bedford and B.~A.~Taylor, A new capacity for plurisubharmonic functions, \emph{Acta Math.} \textbf{149}, 1--40 (1982).
\bibitem[B04]{B04} Z.~B\l ocki, On the definition of the Monge--Amp\`ere operator in $\bC^2$, \emph{Math. Ann.} \textbf{328}, no.~3, 415--423 (2004).
\bibitem[B06]{B06} Z.~B\l ocki, The domain of definition of the complex Monge--Amp\`ere operator, \emph{Amer. J. Math.} \textbf{128}, no.~2, 519--530 (2006).
\bibitem[B09]{B09} Z.~B\l ocki, A note on maximal plurisubharmonic functions, \emph{Uzbek Math. J.} 2009, no.~1, 28--32 (2009).
\bibitem[C98]{C98} U.~Cegrell, Pluricomplex energy, \emph{Acta Math.} \textbf{180}, no.~2, 187--217 (1998).
\bibitem[C04]{C04} U.~Cegrell, The general definition of the complex Monge--Amp\`ere operator, \emph{Ann. Inst. Fourier (Grenoble)} \textbf{54}, no.~1, 159--179 (2004).
\bibitem[C08]{C08} U.~Cegrell, A general Dirichlet problem for the complex Monge--Amp\`ere operator, \emph{Ann. Polon. Math.} \textbf{94}, no.~2, 131--147 (2008).
\bibitem[C09]{C09} U.~Cegrell, Maximal plurisubharmonic functions, \emph{Uzbek Math. J.} 2009, no.~1, 10--16 (2009).
\bibitem[D85]{Demailly1985} J.-P.~Demailly, Mesures de Monge--Amp\`ere et caract\'erisation g\'eom\'etrique des vari\'et\'es alg\'ebriques affines, \emph{M\'em. Soc. Math. France (N.S.)} \textbf{19}, 1--124 (1985).
\bibitem[D93]{D93} J.-P.~Demailly, Monge--Amp\`ere operators, Lelong numbers and intersection theory, in \emph{Complex Analysis and Geometry}, V.~Ancona and A.~Silva (eds.), Univ. Ser. Math., Plenum Press, New York, 115--193 (1993).
\bibitem[GR56]{GR56} H.~Grauert and R.~Remmert, Plurisubharmonische Funktionen in komplexen R\"aumen, \emph{Math. Z.} \textbf{65}, 175--194 (1956).
\bibitem[LT08]{LT} M.~Lejeune-Jalabert and B.~Teissier, Cl\^oture int\'egrale des id\'eaux et \'equisingularit\'e, with an appendix by J.-J.~Risler, \emph{Ann. Fac. Sci. Toulouse Math.} (6) \textbf{17}, no.~4, 781--859 (2008).
\bibitem[McA80]{McA} S.~McAdam, Asymptotic prime divisors and analytic spreads, \emph{Proc. Amer. Math. Soc.} \textbf{80}, no.~4, 555--559 (1980).
\bibitem[NP09]{KH09} V.~K.~Nguyen and H.~H.~Pham, A comparison principle for the complex Monge--Amp\`ere operator in Cegrell's classes and applications, \emph{Trans. Amer. Math. Soc.} \textbf{361}, no.~10, 5539--5554 (2009).
\bibitem[Oka51]{Oka51} K.~Oka, Sur les fonctions analytiques de plusieurs variables. VIII. Lemme fondamental, \emph{J. Math. Soc. Japan} \textbf{3}, 204--214 and 259--278 (1951).
\bibitem[P06]{H06} H.~H.~Pham, The comparison principle and Dirichlet problem in the class $\mathcal E_p(f)$, $p>0$, \emph{Ann. Polon. Math.} \textbf{88}, no.~3, 247--261 (2006).
\bibitem[R98]{R98} A.~Rashkovskii, Maximal plurisubharmonic functions associated with holomorphic mappings, \emph{Indiana Univ. Math. J.} \textbf{47}, no.~1, 297--309 (1998).
\bibitem[R09]{R09} A.~Rashkovskii, Analyticity and propagation of plurisubharmonic singularities, in \emph{Functional Analysis and Complex Analysis}, Contemporary Mathematics, vol.~481, American Mathematical Society, Providence, RI, 137--143 (2009).
\bibitem[RS05]{RS} A.~Rashkovskii and R.~Sigurdsson, Green functions with singularities along complex spaces, \emph{Internat. J. Math.} \textbf{16}, no.~4, 333--355 (2005).
\bibitem[Rem56]{Remmert56} R.~Remmert, Projektionen analytischer Mengen, \emph{Math. Ann.} \textbf{130}, 410--441 (1956).
\bibitem[S72]{Shiffman} B.~Shiffman, Extension of positive line bundles and meromorphic maps, \emph{Invent. Math.} \textbf{15}, 332--347 (1972).
\bibitem[S74]{Siu} Y.-T.~Siu, Analyticity of sets associated to Lelong numbers and the extension of closed positive currents, \emph{Invent. Math.} \textbf{27}, 53--156 (1974).
\end{thebibliography}
\end{document}